\DeclareMathOperator{\Cone}{Cone} 
\DeclareMathOperator{\Ker}{Ker}
\DeclareMathOperator{\Hom}{Hom} \DeclareMathOperator{\rk}{rk}
\DeclareMathOperator{\GL}{GL}
\DeclareMathOperator{\odd}{odd}
\DeclareMathOperator{\Flats}{Flats}
\DeclareMathOperator{\Gr}{Gr}
\newcommand{\Zo}{\mathbb{Z}}
\newcommand{\Ro}{\mathbb{R}}
\newcommand{\Rg}{\mathbb{R}_{\geqslant 0}}
\newcommand{\Co}{\mathbb{C}}
\newcommand{\Qo}{\mathbb{Q}}
\newcommand{\B}{\mathbf{B}}
\newcommand{\Sp}{\mathbf{Sp}}
\newcommand{\ca}[1]{\mathcal{#1}}
\newcommand{\dd}{\partial}
\newcommand{\F}{\mathcal{F}}
\newcommand{\Nq}{N_\Qo}
\newcommand{\CP}{\mathbb{C}P}
\newcounter{stmcounter}[section]
\numberwithin{equation}{section}
\theoremstyle{plain}
\newtheorem{cor}[stmcounter]{Corollary}
\newtheorem{thm}[stmcounter]{Theorem}
\newtheorem{prop}[stmcounter]{Proposition}
\newtheorem{lem}[stmcounter]{Lemma}
\theoremstyle{definition}
\newtheorem{defin}[stmcounter]{Definition}
\theoremstyle{remark}
\newtheorem{ex}[stmcounter]{Example}
\newtheorem{rem}[stmcounter]{Remark}
\newtheorem{con}[stmcounter]{Construction}
\begin{document}

\title{Toric orbit spaces which are manifolds}

\author{Anton Ayzenberg}
\address{Faculty of computer science, Higher School of Economics
}
\email{ayzenberga@gmail.com}

\author{Vladimir Gorchakov}
\address{Faculty of computer science, Higher School of Economics}
\email{vyugorchakov@edu.hse.ru}

\date{\today}
\thanks{The article was prepared within the framework of the HSE University Basic Research Program}

\subjclass[2020]{Primary: 57S12, 55R55, 57R91, 54B15, 20C35, Secondary: 06A07, 52B12, 52B40, 05B35, 57M60, 57R18}

\keywords{torus representation, orbit space, matroid, pseudomanifold, Hopf bundle, Kaluza--Klein model, Dirac monopole, Leontief substitution system}

\begin{abstract}
We characterize the actions of compact tori on smooth manifolds for which the orbit space is a topological manifold (either closed or with boundary). For closed manifolds the result was originally proved by Styrt in 2009. We give a new proof for closed manifolds which is also applicable to manifolds with boundary. In our arguments we use the result of Provan and Billera who characterized matroid complexes which are pseudomanifolds. We study the combinatorial structure of torus actions whose orbit spaces are manifolds. In two appendix sections we give an overview of two theories related to our work. The first one is the combinatorial theory of Leontief substitution systems from mathematical economics. The second one is the topological Kaluza--Klein model of Dirac's monopole studied by Atiyah. The aim of these sections is to draw some bridges between disciplines and motivate further studies in toric topology.
\end{abstract}

\maketitle

\section{Introduction}\label{secIntro}

The classical object of toric topology are the actions of compact tori $T$ on smooth manifolds $X$ which have topological orbit space $X/T$ isomorphic to simple polytopes. The examples include smooth projective toric varieties and their topological generalizations: quasitoric manifolds and torus manifolds, and moment-angle manifolds. All these manifolds are examples of torus actions of complexity zero. In general, it is known that orbit spaces of actions of complexity zero are manifolds with corners.

Buchstaber and Terzi\'{c}~\cite{BT} initiated the study of orbit spaces of torus actions of positive complexity. In particular, they proved that the canonical torus action on a Grassmann manifold $\Gr_{4,2}$ of $2$-planes in $\Co^4$ has orbit space homeomorphic to a sphere and a similar result for the canonical torus action of the variety of full flags in $\Co^3$. In~\cite{AyzCompl}, the first author proved a local statement that, under certain assumption, the orbit space of a complexity one action in general position is a closed topological manifold. This result was later extended by Cherepanov in~\cite{Cher}: the orbit spaces of complexity one actions in non-general position are manifolds with corners.

A natural question arises: describe torus actions on manifolds which have orbit spaces either manifolds or manifolds with boundary. This question stays in parallel to the seminal work of Vinberg~\cite{Vin}, and the later works of Mikhailova and Lange~\cite{Mikh,Lange16,Lange,LangMikh} who classified all finite group actions whose orbit spaces are closed manifolds. Using the slice theorem, the questions of this sort reduce to linear representations of the corresponding group. We prove the following results about linear representations.

\begin{thm}\label{thmIntroOpen}
Assume that the orbit space of a representation of a compact torus $T$ on a real vector space $V$ is homeomorphic to $\Ro^m$ for some $m$. Then the representation is weakly equivalent to a product of complexity one representations in general position and, probably, a trivial representation.
\end{thm}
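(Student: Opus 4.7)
The plan is to reduce the open-manifold statement to its closed-manifold counterpart (orbit space a sphere), which is essentially Styrt's theorem and is reproved in this paper via the Provan--Billera classification of pseudomanifold matroid complexes. The reduction has two small ingredients: peeling off the trivial summand, and passing from the representation to its unit sphere.

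First, fix a $T$-invariant inner product on $V$ and decompose $V = V^T \oplus W$, where $W$ is the sum of nontrivial isotypic components. Then $V/T \cong V^T \times (W/T)$, and the hypothesis $V/T \cong \Ro^m$ forces $W/T \cong \Ro^{m - \dim V^T}$, so it suffices to treat the case $V^T = 0$; the fixed subspace will reappear as the optional trivial factor in the conclusion. With $V^T = 0$, radial rescaling gives a $T$-equivariant homeomorphism $V \setminus \{0\} \cong S(V) \times \Ro_{>0}$ (trivial action on the radius), hence $(V \setminus \{0\})/T \cong (S(V)/T) \times \Ro_{>0}$. On the other hand, the origin of $V$ is the unique $T$-fixed point and maps to a single point of $V/T \cong \Ro^m$, so $(V \setminus \{0\})/T \cong \Ro^m \setminus \{\ast\} \cong S^{m-1} \times \Ro_{>0}$. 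Comparing the two descriptions, $S(V)/T \cong S^{m-1}$, which in particular is a closed topological manifold.

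At this point I would invoke the closed-manifold theorem reproved in this article: if the orbit space of a nontrivial torus representation is a closed topological manifold, then the representation is weakly equivalent to a product of complexity-one representations in general position. The proof should extract the matroid $\mathcal{M}$ of weights from the stratification of $S(V)/T$ by orbit types, apply Provan--Billera to conclude that $\mathcal{M}$ is a direct sum of uniform matroids of the form $U_{r_i, r_i+1}$, and then reconstruct the representation from its weight matroid up to weak equivalence; each summand $U_{r_i, r_i+1}$ corresponds to a complexity-one subrepresentation in general position on a subtorus factor. The main obstacle lies entirely in this closed case, specifically in (i) showing that the topological condition on $S(V)/T$ really forces the associated matroid complex to be a pseudomanifold (rather than satisfying a strictly weaker combinatorial condition), and (ii) upgrading the matroid direct-sum decomposition into an honest splitting of $V$ into subrepresentations of subtori of $T$. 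Once the sphere case is in hand, Theorem~\ref{thmIntroOpen} follows from the desuspension and trivial-summand arguments above.
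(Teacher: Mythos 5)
Your argument is a reduction plan rather than a proof: the step you yourself label ``the main obstacle'' --- showing that the topological hypothesis forces the weight matroid complex $K(\alpha)$ to be a pseudomanifold, and then converting the matroid splitting back into a splitting of the representation --- is precisely the mathematical content of the theorem, and you do not supply it. In the paper this is Proposition~\ref{propMfdPseudomfd} together with Remark~\ref{remCorrespondence}: for each ridge $J$ of $K(\alpha)$ one takes a generic point $x$ of the invariant subspace $V_A$ spanned by the weight spaces of the flat generated by $J$, applies the Slice Theorem to see that near $[x]$ the orbit space is, up to a Euclidean factor, the quotient of a circle representation on $\Co^{|B_J|}$, and then Lemma~\ref{lemCircle} (the cone over a weighted projective space $\CP^{n-1}(\alpha)$ has nonvanishing third local homology at the apex, so it is not a homology manifold for $n\geqslant 3$) forces $|B_J|=2$; Provan--Billera (Proposition~\ref{propBillProv}) then gives the join-of-simplex-boundaries structure. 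Sketching that this ``should'' work is not a substitute for the slice-theorem and local-homology computation, so as it stands the proposal does not prove the statement.

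Moreover, the reduction you do carry out relies twice on cancellation of Euclidean factors, which is invalid in the topological category. From $V/T\cong \Ro^{\dim V^T}\times (W/T)\cong\Ro^m$ you cannot conclude $W/T\cong\Ro^{m-\dim V^T}$ (the Whitehead manifold $W$ satisfies $W\times\Ro\cong\Ro^4$ but $W\not\cong\Ro^3$), and from $(S(V)/T)\times\Ro_{>0}\cong S^{m-1}\times\Ro_{>0}$ you cannot conclude $S(V)/T\cong S^{m-1}$ (h-cobordant, non-homeomorphic lens spaces give non-cancellation even for closed manifolds). The paper avoids exactly this trap by never identifying any orbit space up to homeomorphism: it works with local homology and the K\"{u}nneth formula (Lemma~\ref{lemHomMfd}), for which stabilization by $\Ro^s$ \emph{does} detect failure to be a homology manifold, and it applies the hypothesis $V/T\cong\Ro^m$ only through the statement that $V/T$ is a (closed) homology manifold. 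If you want to keep your sphere reduction, you would have to replace both cancellations by homology-manifold arguments of this kind --- at which point you have essentially rejoined the paper's proof, with the pseudomanifold step still left to do.
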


\begin{thm}\label{thmIntroBoundary}
Assume that the orbit space of a representation of a compact torus $T$ on a real vector space $V$ is homeomorphic to a half-space $\Rg\times\Ro^{m-1}$. Then the representation is weakly equivalent to a complexity zero representation, probably multiplied by a product of complexity one representations in general position and a trivial representation.
\end{thm}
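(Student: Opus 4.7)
The plan is to reduce Theorem~\ref{thmIntroBoundary} to Theorem~\ref{thmIntroOpen} by identifying, inside $V$, a single complex line $W\cong\Co$ responsible for the boundary and a complementary $T$-invariant subspace $V^H$ whose orbit space is the entire boundary facet. First I would split off the trivial summand $V=V^T\oplus U$; since $V/T=V^T\times(U/T)$, the half-space hypothesis transfers to $U$, and we may assume $V$ has no nonzero $T$-fixed vectors.

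The next step is to pin down a distinguished circle $H\subset T$. The boundary $\partial(V/T)\cong\Ro^{m-1}$ is a topological $(m-1)$-manifold, so a generic boundary point has a half-space neighborhood; by the slice theorem, a generic boundary orbit must have stabilizer of dimension exactly one. Let $H$ be the identity component of this stabilizer. Then $V^H/T$ contains the closure of the top stratum of the boundary, giving $V^H/T=\partial(V/T)\cong\Ro^{m-1}$. I would then choose a splitting $T=H\times T'$ (valid up to a finite cover that does not change the orbit space) together with a $T$-invariant orthogonal decomposition $V=V^H\oplus W$, where $H$ acts without nonzero fixed vectors on $W$. Applying Theorem~\ref{thmIntroOpen} to the $T'$-representation $V^H$ yields that $V^H$ is weakly equivalent to a product of complexity-one representations in general position and a trivial representation.

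It remains to show $W\cong\Co$ with $H=S^1$ acting by standard rotation. A direct dimension count suffices: since $V^H/T=V^H/T'$ equals the $(m-1)$-dimensional boundary and $T'=T/H$ of dimension $\dim T-1$ acts on $V^H$ with generically finite stabilizer, we obtain $\dim V^H-(\dim T-1)=m-1$, hence $\dim W=\dim V-\dim V^H=2$, so $W\cong\Co$ and $H$ acts by some nontrivial rotation character. The residual $T'$-action on $W$ is also by a character, which can be trivialized by replacing $T'$ with the complementary subgroup $\{((t')^{-b},t'):t'\in T'\}\subset H\times T'$; this is a weak-equivalence operation. Combined with the previous paragraph, this decomposes $V$ into the form claimed by the theorem.

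The main difficulty I foresee is the rigorous identification $V^H/T=\partial(V/T)$ together with the uniqueness of $H$. Many circle subgroups may have nontrivial fixed subspaces, producing a rich orbit-type stratification in which only one facet is topologically visible as the boundary while the others are hidden inside the Euclidean directions. A clean justification should combine the slice theorem at a generic boundary point with the combinatorial input mentioned in the abstract, namely the Provan--Billera characterization of pseudomanifold matroid complexes applied to the weight matroid of $V$: the half-space hypothesis should correspond precisely to this matroid having a unique coloop, whose deletion recovers the matroid condition governing Theorem~\ref{thmIntroOpen}.
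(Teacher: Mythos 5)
Your reduction breaks at the step ``$V^H/T=\partial(V/T)\cong\Ro^{m-1}$'', and the error is not just a missing justification: the claim, and the conclusion you draw from it ($\dim W=2$, i.e.\ a \emph{single} complex line, a \emph{unique} coloop, responsible for the boundary), are false in general. Take the standard representation of $T^2$ on $\Co^2$ (or any complexity-zero representation on $\Co^d$ with $d\geqslant 2$). Its orbit space is $\Rg^2\cong\Rg\times\Ro$, so the hypothesis of the theorem holds, and the correct conclusion is a complexity-zero factor of complex dimension $2$. But in your scheme a generic boundary point lies on one of the two rays of $\partial(\Rg^2)$, $H$ is the corresponding coordinate circle, $V^H$ is the \emph{other} coordinate line, and $V^H/T$ is a single ray --- a proper closed subset of $\partial(V/T)$, not all of it. Consequently $V^H/T'\cong\Rg$, not $\Ro^{m-1}$, so Theorem~\ref{thmIntroOpen} cannot be applied to $V^H$; and your dimension count would force $d=1$, contradicting the example. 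The underlying misconception (which also appears in your closing remark about ``a unique coloop'') is that the topological boundary of the half-space is one smooth facet; in fact it is typically a union of several facets with \emph{different} circle stabilizers, glued along lower strata, and no single $H$ sees all of it. In the Provan--Billera picture the half-space hypothesis corresponds to the simplex join-factor being nonempty, i.e.\ to \emph{at least} one coloop, not exactly one.

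One could try to repair this by an induction in which $V^H/T'$ is allowed to be either $\Ro^{m-1}$ or again a half-space, peeling off one coloop at a time; but then you must first prove that $V^H/T'$ is a topological manifold with or without boundary, and nothing in your argument delivers that --- this is exactly where the real work sits. Note also that your assertion that a generic boundary orbit has one-dimensional stabilizer, while true, already needs the obstruction computation you never invoke: one must rule out circle slices $\Co^n/T^1$ with $n\geqslant 3$, which the paper does by observing that this cone over a weighted projective space (having the homology of $\CP^{n-1}$) has a nonvanishing local $H_3$ and hence is not a homology manifold (Lemma~\ref{lemCircle}). The paper's route avoids your global boundary analysis entirely: apply the slice theorem at a generic point of the face subspace attached to each ridge of the weight matroid complex $K(\alpha)$, use Lemma~\ref{lemCircle} to conclude that every ridge lies in at most two facets, so $K(\alpha)$ is a pseudomanifold with boundary, and then invoke Proposition~\ref{propBillProv} to identify $K(\alpha)$ as a join of one simplex (with $d\geqslant 1$ vertices, the coloops) with boundaries of simplices and a ghost complex, which translates into the non-totally Leontief form.
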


After we obtained these results, we found that Theorem~\ref{thmIntroOpen} was already proved by Styrt~\cite{Styrt} in a greater generality. The arguments of his proof are essentially similar to ours. However, in our case one essential step of the proof is simplified since we refer to a known result of Provan and Billera~\cite{BilProv} from matroid theory. The same result is applied in the proof of Theorem~\ref{thmIntroBoundary}. The cited result about matroids (see Proposition~\ref{propBillProv}) was originally motivated by the combinatorial study of Leontief substitution systems. For this reason we call the products of representations of complexities zero and one, which appear in the statements of the theorems, \emph{Leontief representations}, see Definition~\ref{definLeontRepres}.

The paper has the following structure. Section~\ref{secDefinitions} contains all basic definitions, examples, and a rigorous statement of the result. Theorems~\ref{thmIntroOpen} and~\ref{thmIntroBoundary} are proved in Section~\ref{secProofs}. In Sections~\ref{secTorusActions} and~\ref{secLowerFaces} we recall basic notions related to torus actions on smooth manifolds, and introduce Leontief torus actions, which are basically the actions whose orbit space is a manifold (with or without boundary). We  study the combinatorial structure of Leontief torus actions and relate these results to the works~\cite{AyzCompl,Cher} on torus actions of complexity one.

There are two appendix sections with a general exposition of two topics related to our work. Appendix~\ref{secLeontief} contains a brief overview of Leontief substitution systems and explains the (somewhat equivocal) choice of the term Leontief representation. Appendix~\ref{secMonopoles} is devoted to the topological aspects of Kaluza--Klein model with Dirac's magnetic monopoles. Essentially, this model is a very particular example of a torus action of complexity one in general position. The important property of this model is that the orbit space of an action is an open manifold. Keeping this in mind, one can consider Leontief torus actions as the most general higher-dimensional analogues of the topological Kaluza--Klein theory. This observation may become a good motivation for the further study of such torus actions.

\section{Definitions and results}\label{secDefinitions}

In the paper $T^k$ denotes the compact $k$-dimensional torus considered as a Lie group. The lattice $N=\Hom(T^k,S^1)\cong \Zo^k$ is called \emph{the weight lattice}, and its dual lattice $N^*=\Hom(S^1,T^k)$ is called \emph{the lattice of 1-dimensional subgroups}.

Consider a representation of $T=T^k$ on $V=\Ro^{m}$. It decomposes into a direct sum of irreducible representations. An irreducible representation of the abelian group $T$ has real dimension 1 or 2. One-dimensional representations are trivial (no-actions). A 2-dimensional real representation $V(\alpha)$ is determined by a nonzero weight $\alpha\in N$, so that
\[
V(\alpha)\cong \Co\cong\Ro^2,\quad tz=\alpha(t)\cdot z.
\]
Since neither a complex structure nor an orientation is fixed on $\Ro^2$, the weight $\alpha$ is determined uniquely up to sign. Therefore, an arbitrary representation $V$ of a torus decomposes into the sum
\begin{equation}\label{eqDecomposition}
V\cong V(\alpha_1)\oplus\ldots\oplus V(\alpha_r)\oplus \Ro^{m-2r},
\end{equation}
where the torus action is trivial on $\Ro^{m-2r}$, and $\alpha_1,\ldots,\alpha_r\in N$ is a collection of nonzero vectors of the weight lattice $N$, defined up to sign.

In the following we consider weights as rational vectors in the vector space $\Nq=N\otimes\Qo\cong\Qo^k$. Moreover, since weights are defined up to sign, we can treat them as rational lines (one-dimensional vector subspaces). Although a rational line contains infinitely many nonzero integral vectors, the choice of a representative is nonessential for our arguments. It follows that any torus representation is completely characterized by a multiset $\alpha=\{\alpha_1,\ldots,\alpha_r\}$ of vectors (or lines) in $\Nq$.

Notice that the summand $\Ro^{m-2r}$ in~\eqref{eqDecomposition} is the fixed point subspace of the representation. So far, the representation has isolated fixed point if and only if $m=2r$.

In the following it is assumed that torus representations are effective, which means their weights span the vector space $\Nq$.

\begin{defin}\label{definComplexity}
Consider a representation of $T=T^k$ on $V=\Ro^m$ with the weight system $\alpha=\{\alpha_1,\ldots,\alpha_r\}$. The number $r-k=|\alpha|-\rk\alpha$ is called \emph{the complexity of the representation}.
\end{defin}

Notice that complexity is always a nonnegative number and does not depend on the dimension of the trivial summand of the representation. If a representation $V$ has isolated fixed point, its complexity equals $\frac{1}{2}\dim V-\dim T$.

Change of coordinates in a torus motivates the definition of weakly equivalent representations.

\begin{defin}
Two representations $V$ and $W$ of $T$ are called weakly equivalent, if there is an automorphism $\psi\colon T\to T$ and an isomorphism $g\colon V\to W$ such that $g(tv)=\psi(t)g(v)$.
\end{defin}

\begin{ex}\label{exCompl0}
A representation of complexity 0 takes the form
\[
V(\alpha_1)\oplus\cdots\oplus V(\alpha_n)\oplus \Ro^{m-2n},
\]
where $\alpha=\{\alpha_1,\ldots,\alpha_n\}\subset N=\Hom(T^n,T^1)$ is a rational basis of $\Nq\cong\Qo^n$. Using Smith normal form over $\Zo$, we can change coordinates in $T^n$ (or equivalently in $N$). Therefore, up to weak equivalence, we have $\alpha_i=d_ie_i$, where $\{e_1,\ldots,e_n\}$ is the basis of the lattice $N$, and $d_i$'s are nonzero integers satisfying $d_1\mid d_2\mid\cdots\mid d_n$. Assuming there is no trivial component, a complexity zero action takes the form
\[
(t_1,\ldots,t_n)(z_1,\ldots,z_n)=(t_1^{d_1}z_1,\ldots,t_n^{d_n}z_n),
\]
for $z_i\in\Co$. If $d_i=1$ for any $i$, the representation is called \emph{standard}. This class of representations is well studied and widely used in toric topology.

However, even for general $d_i$'s, the orbit space of the complexity zero representation (without trivial component) is a nonnegative cone $\Rg^n$. As a topological space it is homeomorphic to the halfspace $\Rg\times\Ro^{n-1}$.
\end{ex}

\begin{defin}\label{definComplOneGenPos}
A representation of $T=T^{n-1}$ on $V\cong\Ro^{2n}$ is called \emph{a complexity one representation in general position} if its trivial part vanishes, and any $n-1$ of the weights $\alpha=\{\alpha_1,\ldots,\alpha_n\}\subset\Nq\cong\Qo^{n-1}$ are linearly independent over $\Qo$.
\end{defin}

For a complexity one representation in general position, the collection $\alpha=\{\alpha_1,\ldots,\alpha_n\}\subset N$ determines a group homomorphism
\[
A=\prod_{i=1}^n\alpha_i\colon T^{n-1}\to T^n.
\]
Since $\alpha$ spans $N_\Qo$, the kernel $\Ker A$ is a finite abelian group. The image of $A$ is a codimension 1 toric subgroup $\{(t_1,\ldots,t_n)\in T^n\mid \prod_{i=1}^{n}t_i^{c_i}=1\}$ where $c_1\alpha_1+\cdots+c_n\alpha_n=0$ is a unique (up to multiplier) linear relation on $n$ vectors $\alpha_i$ in $\Nq\cong\Qo^{n-1}$, and $c_i$'s don't have a nontrivial common divisor. The condition that any $n-1$ of $\alpha_i$'s are independent is equivalent to $c_i\neq 0$ for any $i$.

Since $\alpha_i$ are only defined up to sign, we can assume that $c_i$'s are natural numbers. The orbit space for the original representation naturally coincides with that of the image of $A$, which implies the following observation.

\begin{lem}\label{lemTechnical}
The orbit space of a complexity one representation of $T^{n-1}$ in general position on $V\cong\Ro^{2n}$ is homeomorphic to the orbit space of the action of the subgroup $H=\left\{(t_1,\ldots,t_n)\in T^n\mid \prod_{i=1}^nt_i^{c_i}=1\right\}$, where $c_i>0$, induced by the standard action of $T^n$ on $\Co^n\cong \Ro^{2n}$.
\end{lem}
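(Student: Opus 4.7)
The plan is to factor the given $T^{n-1}$-action through the homomorphism $A$ and then identify its image with the subgroup $H$. First, choose complex structures on each summand $V(\alpha_i)$ to obtain an isomorphism $V \cong \Co^n$. Under this identification, the action of $T^{n-1}$ becomes coordinatewise multiplication by characters,
\[
t\cdot(z_1,\ldots,z_n) = (\alpha_1(t)z_1,\ldots,\alpha_n(t)z_n) = A(t)\cdot(z_1,\ldots,z_n),
\]
where on the right the element $A(t)=(\alpha_1(t),\ldots,\alpha_n(t))\in T^n$ acts by the standard action of $T^n$ on $\Co^n$. Consequently, the $T^{n-1}$-orbits and the $\im(A)$-orbits on $\Co^n$ coincide set-theoretically, and the quotient maps factor through a canonical homeomorphism $V/T^{n-1}\toiso \Co^n/\im(A)$.

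The second step is to prove that $\im(A)=H$. The inclusion $\im(A)\subseteq H$ is immediate from the defining linear relation $\sum_i c_i\alpha_i=0$ in $N$: for every $t\in T^{n-1}$,
\[
\prod_{i=1}^n \alpha_i(t)^{c_i} = \Bigl(\sum_{i=1}^n c_i\alpha_i\Bigr)(t) = 1,
\]
so $A(t)\in H$. For the reverse inclusion I would compare dimensions and invoke connectedness. Since $\{\alpha_1,\ldots,\alpha_n\}$ spans $\Nq$, the kernel $\Ker A$ is finite, so $\im(A)$ is a connected closed subgroup of $T^n$ of dimension $n-1$. On the other hand, $H$ is the kernel of the character $\chi\colon T^n\to T^1$ sending $(t_1,\ldots,t_n)$ to $\prod t_i^{c_i}$; the vector $(c_1,\ldots,c_n)$ is primitive by assumption, which means $\chi$ is a surjective homomorphism with connected kernel, hence $H$ is a subtorus of dimension $n-1$. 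Two closed connected subgroups of $T^n$ of the same dimension one of which contains the other must coincide, giving $\im(A)=H$.

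Finally, since each weight $\alpha_i$ is only defined up to sign, flipping signs of some $\alpha_i$ induces flipping of the corresponding signs of $c_i$ in the relation $\sum_i c_i\alpha_i=0$. The genericity hypothesis ensures every $c_i\neq 0$, so by choosing signs of representatives appropriately we may assume all $c_i>0$, matching the statement of the lemma. There is no serious obstacle here; the lemma is essentially a reinterpretation of the action via its image in $T^n$, and the only point requiring a bit of care is the primitivity argument that shows the character $\chi$ has connected kernel.
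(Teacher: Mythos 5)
Your proposal is correct and follows essentially the same route as the paper: the action is factored through the homomorphism $A=\prod_i\alpha_i\colon T^{n-1}\to T^n$, so the orbit space coincides with that of $\im A$, and $\im A$ is identified with $H$ via the unique relation $\sum_i c_i\alpha_i=0$ (with signs of the $\alpha_i$ adjusted to make every $c_i>0$). Your dimension-plus-connectedness argument for $\im A=H$ just spells out the detail the paper asserts directly, so there is nothing to add.
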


Notice that the stabilizer subgroups of the original action and those of $H$ do not necessarily coincide: these depend on the finite group $\Ker A$ described above. For the orbit space, however, one can use the particular model action of $H$ to prove the following result.


\begin{prop}[{\cite[Lm.2.11]{AyzCompl} or~\cite[Thm.3.6]{Styrt}}]\label{propLocalQuotient}
For a representation of $T^{n-1}$ on $\Co^n$ of complexity one in general position we have a homeomorphism $\Co^n/T^{n-1}\cong \Ro^{n+1}$.
\end{prop}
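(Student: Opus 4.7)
The plan is to apply Lemma~\ref{lemTechnical} to reduce to the explicit model of the subgroup $H=\{(t_1,\ldots,t_n)\in T^n : \prod t_i^{c_i}=1\}$ (positive integers $c_i$ with no common factor) acting on $\Co^n$ through the standard $T^n$-action, and then to construct an $H$-invariant embedding of the quotient into Euclidean space via
\[
\Phi\colon \Co^n \to \Rg^n\times \Co,\qquad \Phi(z)=\bigl(|z_1|^2,\ldots,|z_n|^2,\; z_1^{c_1}\cdots z_n^{c_n}\bigr).
\]
The map is $H$-invariant because $\prod t_i^{c_i}=1$ on $H$; it is proper, since $\|z\|$ is controlled by the moduli; and it separates $H$-orbits, because if $\Phi(z)=\Phi(z')$ with all coordinates nonzero, one writes $z'_i=t_iz_i$ with $t_i\in S^1$, and the equality of the last coordinate forces $\prod t_i^{c_i}=1$ (the degenerate cases, with some $z_i=0$, are handled using the remaining freedom in the corresponding $t_i$). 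Hence $\bar\Phi$ identifies $\Co^n/H$ homeomorphically with the semialgebraic variety
\[
X=\{(x,w)\in\Rg^n\times\Co : |w|^2 = x_1^{c_1}\cdots x_n^{c_n}\}.
\]

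Next I would reduce the problem to one about the link of the origin. The $\Ro_{>0}$-action $\lambda\cdot(x,w)=(\lambda^2 x,\lambda^{\sum c_i}w)$ preserves $X$, has the origin as its unique fixed point, and acts freely elsewhere, so $X$ is homeomorphic to the open cone on its link $L$. This link coincides with $S^{2n-1}/H$ (the image of the unit sphere in $\Co^n$), so it suffices to show $L\cong S^n$, whence $X\cong\Cone(S^n)\cong\Ro^{n+1}$.

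To identify $L$, I compose with the moment map $\mu\colon S^{2n-1}\to\Delta^{n-1}$, $\mu(z)=(|z_1|^2,\ldots,|z_n|^2)$, to get a continuous surjection $\pi\colon L\to\Delta^{n-1}$. Over the relative interior of $\Delta^{n-1}$ the $T^n$-stabilizer is trivial, so the fiber of $\pi$ is $T^n/H\cong S^1$. Over a proper face, where $z_j=0$ for $j$ in a nonempty $J$, the stabilizer is $T^J$, and the positivity of every $c_j$ ensures that the composite $T^J\hookrightarrow T^n\twoheadrightarrow T^n/H$ is already surjective; hence $H\cdot T^J=T^n$ and the fiber collapses to a single point. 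Consequently $L$ is homeomorphic to $\Delta^{n-1}\times S^1$ with the identification $(x,\phi)\sim(x,\phi')$ for $x\in\partial\Delta^{n-1}$. Splitting $\Delta^{n-1}$ into a smaller inner $D^{n-1}$ and a collar of its boundary, and performing the collapse on the collar, gives the genus-one Heegaard-type decomposition
\[
L=(D^{n-1}\times S^1)\cup_{S^{n-2}\times S^1}(S^{n-2}\times D^2)=\partial(D^{n-1}\times D^2)\cong S^n,
\]
equivalently the join $S^{n-2}\ast S^1=S^n$.

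The main technical step is the fiber analysis of $\pi$ over the boundary strata of $\Delta^{n-1}$: one must rule out an intermediate collapse (for instance, a finite cyclic quotient of the generic $S^1$ fiber) and verify that each such fiber truly degenerates to a point. This is precisely where the positivity of every $c_j$ enters, through the surjectivity of $T^J\to T^n/H$. Once this structural description of $L$ is in place, its identification with $S^n$ is a routine combinatorial matter.
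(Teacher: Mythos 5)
Your argument is correct, and it is a genuinely different (self-contained) route from the paper, which does not prove Proposition~\ref{propLocalQuotient} at all but cites \cite[Lm.~2.11]{AyzCompl} and \cite[Thm.~3.6]{Styrt}, unpacking only the case $n=2$ in Appendix~\ref{secMonopoles} as the cone over the weighted Hopf map $p_{k,l}\colon S^3\to\CP^1(k,l)\cong S^2$. Your proof generalizes exactly that picture: after the reduction via Lemma~\ref{lemTechnical}, the heart of the matter is $S^{2n-1}/H\cong S^n$, which you obtain from the moment map and the collapse of circle fibers over $\dd\Delta^{n-1}$, yielding the join $S^{n-2}\ast S^1\cong S^n$; this buys an explicit, reference-free argument (the embedding $\Phi$ is actually dispensable, since $\Co^n/H\cong\Cone\bigl(S^{2n-1}/H\bigr)$ directly, but it does give a nice semialgebraic model of the quotient). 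The orbit-separation argument for $\Phi$, the role of $c_j>0$ in forcing $H\cdot T^J=T^n$ over boundary faces, and the Heegaard-type gluing $(D^{n-1}\times S^1)\cup_{S^{n-2}\times S^1}(S^{n-2}\times D^2)=\dd(D^{n-1}\times D^2)\cong S^n$ are all sound. The one step you should make explicit is the passage from the fiberwise description of $\pi$ to the global model $(\Delta^{n-1}\times S^1)/\!\sim$: knowing the fibers alone does not determine the quotient, so exhibit a trivializing map, e.g.\ using $\sum_i b_ic_i=1$ (the $c_i$ are coprime) send $(x,\phi)\mapsto\bigl[(\phi^{b_1}\sqrt{x_1},\ldots,\phi^{b_n}\sqrt{x_n})\bigr]$; this is a closed continuous surjection from a compact space onto $L$ whose point preimages are precisely the identifications you describe, which completes the identification of $L$ and hence the proof.
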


\begin{defin}\label{definLeontRepres}
Consider a collection of complexity one representations in general position $T^{n_i-1}\to\GL(\Co^{n_i})$, for $i\in\{1,\ldots,s\}$, a complexity zero representation $T^d\to\GL(\Co^d)$, and a trivial representation on $\Ro^l$. Then the product representation of $T^d\times\prod_{i=1}^{s}T^{n_i-1}$ on $V=\Ro^l\times\Co^d\times\prod_{i=1}^{s}\Co^{n_i}$ is called \emph{a Leontief representation}. It is called \emph{totally Leontief} if $d=0$.
\end{defin}

The reason for the chosen name of the term is explained in Appendix~\ref{secLeontief}. For a totally Leontief representation we have
\[
V/T=\Ro^l\times\prod_{i=1}^{s}\Co^{n_i}/T^{n_i-1}=\Ro^l\times\prod_{i=1}^{s}\Ro^{n_i+1},
\]
so the orbit space is a topological manifold. 
Similarly, the orbit space of any non-totally Leontief representation is a half-space, that is a manifold with boundary.

Theorems~\ref{thmIntroOpen} and~\ref{thmIntroBoundary} stated in the introduction assert that Leontief representations provide an exhaustive list of representations whose orbit spaces are manifolds (either open or bounded). The following is a reformulation of these theorems.

\begin{thm}\label{thmAll}\mbox{}
\begin{enumerate}
  \item Assume that the orbit space $V/T$ of a representation $T\to\GL(V)$ is homeomorphic to $\Ro^l$. Then the representation is weakly equivalent to a totally Leontief representation.
  \item Assume that the orbit space $V/T$ of a representation $T\to\GL(V)$ is homeomorphic to a half-space $\Rg\times\Ro^{l-1}$. Then the representation is weakly equivalent to a non-totally Leontief representation.
\end{enumerate}
\end{thm}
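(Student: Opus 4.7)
\emph{Setup and reduction.} Peeling off the fixed subspace, which contributes a direct Euclidean factor to both $V$ and $V/T$, we may assume the representation is effective with no trivial part, so $V=V(\alpha_1)\oplus\cdots\oplus V(\alpha_r)\cong\Co^r$ and the weights span $\Nq$. These weights define a matroid $\ca{M}$ on $[r]$ with $\rk\ca{M}=k=\dim T$, and a dimension count of a principal orbit gives $\dim V/T=l=2r-k$. The radial $\Rg$-action commutes with $T$, so $V/T$ is naturally a cone over $S(V)/T$ with $S(V)=S^{2r-1}$; the topological hypothesis in case~(1) (resp.~(2)) forces $S(V)/T\cong S^{l-1}$ (resp.\ $\cong D^{l-1}$), an $(l-1)$-dimensional pseudomanifold without (resp.\ with) boundary.

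\emph{The matroid complex is a pseudomanifold.} We stratify $V$, and hence $S(V)$, by the vanishing loci of the complex coordinates: the stratum indexed by $J\subseteq[r]$ is $\{z:z_i\ne 0\text{ exactly for }i\in J\}$, and after quotienting by $T$ the resulting polyhedral stratification of $S(V)/T$ has a face poset naturally identified with the matroid complex $\Delta_{\ca{M}}$ of independent subsets of $\ca{M}$. Applying the slice theorem recursively at each stratum reduces the local structure of $V/T$ around that stratum to the orbit space of a strictly smaller subrepresentation, and an induction on the depth of the stratification then transfers the topological manifoldness of $V/T$ into the assertion that $\Delta_{\ca{M}}$ is a combinatorial pseudomanifold (without boundary in case~(1), with boundary in case~(2)).

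\emph{Matroid classification and reassembly.} By Proposition~\ref{propBillProv} (Provan--Billera), such an $\ca{M}$ is a direct sum of uniform matroids $U_{n_i-1,n_i}$ together with, in case~(2), possibly some additional coloops. A matroid direct sum corresponds to a partition of $[r]$ whose blocks span linearly independent subspaces of $\Nq$, which is precisely a product decomposition of $V$ into subrepresentations; each $U_{n-1,n}$-block gives a complexity one representation on $\Co^n$ in general position (Definition~\ref{definComplOneGenPos}), while the coloops assemble, via a Smith-normal-form change of basis as in Example~\ref{exCompl0}, into a complexity zero summand on $\Co^d$. This is exactly the Leontief decomposition of Definition~\ref{definLeontRepres}; it is totally Leontief (case~(1)) exactly when no coloops are present. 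The main obstacle is the second step above: rigorously propagating the topological manifold hypothesis on $V/T$ across all of its strata to the combinatorial pseudomanifold property of $\Delta_{\ca{M}}$, keeping track of codimension-one incidences and links in the quotient. This is precisely the place where Proposition~\ref{propBillProv} streamlines the argument, since once pseudomanifoldness of $\Delta_{\ca{M}}$ is in hand, the rest of the classification is purely algebraic.
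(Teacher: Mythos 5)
Your overall strategy is the paper's strategy (show the independence complex of the weight matroid is a pseudomanifold, invoke Provan--Billera, translate joins of simplex boundaries and a simplex back into complexity one and complexity zero factors), and your first and third steps are fine. But the middle step --- the only genuinely nontrivial one --- is not proved, and you say so yourself (``the main obstacle is the second step''). ``Apply the slice theorem recursively and induct on the depth of the stratification'' does not by itself yield that every ridge of $K(\alpha)$ lies in exactly two (resp.\ one or two) facets. The missing idea is the local computation at a generic point $x$ of the coordinate subspace $V_A$ attached to the hyperplane flat spanned by a ridge $J$: the slice theorem identifies a neighborhood of $[x]$ in $V/T$ with $\Ro^{s}\times\bigl(\Co^{|B_J|}/S^1\bigr)$, where $|B_J|$ is the number of facets containing $J$ and the circle acts with nonzero weights; and $\Co^n/S^1$ is the open cone over a weighted projective space $\CP^{n-1}(\alpha)$, which by Kawasaki's theorem has the homology of $\CP^{n-1}$, so its nonvanishing $H_3$ obstructs $\Co^n/S^1$ from being a homology manifold (with or without boundary) for every $n\geqslant 3$. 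That computation (Lemma~\ref{lemCircle} in the paper), together with the K\"{u}nneth-type stability of homology manifolds under $\times\,\Ro^s$ (Lemma~\ref{lemHomMfd}), is exactly what forces $|B_J|=2$ in the manifold case and $|B_J|=1$ in the boundary case; without it your ``transfer'' has no teeth, since nothing rules out a ridge lying in three or more facets.

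Two further points in your middle step are shaky. First, the claim that the coordinate-vanishing stratification of $S^{2r-1}/T$ is a polyhedral stratification ``with face poset naturally identified with $\Delta_{\ca{M}}$'' is unjustified: the natural strata are indexed by all subsets of $[r]$ (and the invariant subspaces relevant to the quotient are indexed by \emph{flats}, not independent sets), and their quotient dimensions $2|J|-\rk\{\alpha_j\}_{j\in J}$ do not match the simplices of the independence complex, so there is no obvious isomorphism of posets to exploit. Second, the inference ``$V/T\cong\Ro^l$ and $V/T=\Cone\bigl(S^{2r-1}/T\bigr)$, hence $S^{2r-1}/T\cong S^{l-1}$'' is false as a general topological implication: an open cone over a non-manifold can be Euclidean (remove a suspension point from the double suspension of a homology sphere). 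The paper avoids both issues by never passing to the link globally: it works with local homology (homology manifolds) at well-chosen points and reduces everything to the circle case described above. So the skeleton of your argument is right, but the pseudomanifold step needs to be replaced by the pointwise slice-theorem-plus-weighted-projective-space argument rather than an unspecified induction over strata.
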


The first result was proved in~\cite[Thm.1.3]{Styrt} in a much bigger generality: Styrt characterized all representations of $G\subset \GL(V)$ with orbit spaces homeomorphic to $\Ro^d$, under the assumption that the connected component of $G$ is a compact torus. Our proof of the first part of Theorem~\ref{thmAll} essentially follows the lines of the proof in~\cite{Styrt} for the particular case of $G=T$, however we simplify the argument by referring to some known results about matroids. A similar technique is applied to prove item 2 of Theorem~\ref{thmAll}.

\section{Proofs}\label{secProofs}

Recall that \emph{a(n abstract) simplicial complex} on a vertex set $A$ is a collection $K\subseteq 2^A$ of subsets of $A$, such that (1) $\varnothing\in K$; (2) if $I\in K$ and $J\subset I$, then $J\in K$. The elements of $A$ are called \emph{vertices}, the elements of $K$ are called \emph{simplices}. The value $\dim I = |I|-1$ is called the dimension of a simplex $I$. The maximal dimension of simplices is called the dimension of $K$. A simplex $I$ is called \emph{maximal} (or \emph{a facet}), if there is no $J\in K$ which strictly contains $I$. A simplicial complex is called \emph{pure} if all facets have the same dimension. In a pure simplicial complex, a simplex $J$ is called \emph{a ridge}, if $\dim J=\dim K-1$. An element $i\in A$ is called a ghost vertex of $K$ if $\{i\}\notin K$.

If $K_1,K_2$ are simplicial complexes on the vertex sets $A_1,A_2$ respectively, then the join $K_1\ast K_2$ is a simplicial complex $\{I_1\sqcup I_2\subset A_1\sqcup A_2\mid I_1\in K_1, I_2\in K_2\}$. The full simplex on a set $A$ is the simplicial complex $\Delta_A=2^A$ of all subsets of $A$. The boundary of a simplex on a set $A$ is the simplicial complex $\dd\Delta_A=2^{[A]}\setminus\{A\}$ of all proper subsets of $A$. The ghost complex on a set $A$ is the simplicial complex $o_A=\{\varnothing\}$, in which all vertices are ghost.

\begin{con}\label{conIndepComplex}
For a multiset $\alpha=\{\alpha_1,\ldots,\alpha_r\}$ of vectors in a rational vector space $\Nq$ consider a simplicial complex $K(\alpha)$ on the vertex set $[r]=\{1,\ldots,r\}$ whose simplices are the linearly independent subsets of vectors:
\[
\{i_1,\ldots,i_l\}\in K(\alpha)\Leftrightarrow \alpha_{i_1},\ldots,\alpha_{i_l} \text{ are linearly independent}.
\]
By definition, $K(\alpha)$ is the independence complex of the linear matroid determined by the collection $\alpha$. Since $\alpha$ spans $\Nq\cong\Qo^k$, the complex $K(\alpha)$ is pure of dimension $k-1$.
\end{con}

\begin{rem}\label{remWedges}
As proved by Bj\"{o}rner~\cite{Bjorner}, the independence complex of any matroid is shellable, hence homotopically Cohen--Macaulay. This implies that the geometrical realization $|K(\alpha)|$ is homotopy equivalent to a wedge of $(k-1)$-dimensional spheres.
\end{rem}

Recall the classical notion of combinatorial topology.

\begin{defin}\label{definPseudomfd}
A pure simplicial complex $K$ is called \emph{a (closed) pseudomanifold} if any ridge is contained in exactly two facets. A pure simplicial complex $K$ is called \emph{a pseudomanifold with boundary} if any ridge is contained in one or two facets.
\end{defin}

\begin{rem}
If a ridge is contained in one facet, it is called \emph{a boundary ridge}. When we use the term pseudomanifold with boundary we assume that there exists at least one boundary ridge. So a pseudomanifold without boundary is not considered a pseudomanifold with boundary.
\end{rem}

Our proof of Theorem~\ref{thmAll} is essentially based on the next lemma. For convenience we call the assumption of item 1 in  Theorem~\ref{thmAll} \emph{the manifold case}, and the assumption of item 2 \emph{the boundary case}. Corresponding representations are called respectively \emph{representations of manifold type}, and \emph{representations of boundary type}.

\begin{prop}\label{propMfdPseudomfd}
Consider a representation of a torus $T=T^k\to\GL(V)$, and let $\alpha=\{\alpha_1,\ldots,\alpha_r\}\in\Nq$ be the defining multiset of weights. Then the following hold true.
\begin{enumerate}
  \item In the manifold case, the simplicial complex $K(\alpha)$ is a pseudomanifold.
  \item In the boundary case, the simplicial complex $K(\alpha)$ is a pseudomanifold with boundary.
\end{enumerate}
\end{prop}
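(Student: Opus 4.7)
The plan is to prove both parts at once by a local analysis at an arbitrary ridge of $K(\alpha)$, translating the combinatorial count of facets around that ridge into the topology of a small slice quotient of $V/T$.

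\textbf{Setup near a ridge.} Fix a ridge $J \in K(\alpha)$, so $\alpha_J = \{\alpha_i\}_{i \in J}$ is a linearly independent collection of $k-1$ weights spanning a hyperplane $U \subset \Nq$. A facet containing $J$ has the form $J \cup \{i\}$ with $\alpha_i \notin U$, hence the number $N_J$ of facets containing $J$ equals $\#\{i : \alpha_i \notin U\}$. So proving item (1) amounts to $N_J = 2$ for every ridge, and proving item (2) amounts to $N_J \in \{1,2\}$ for every ridge together with the existence of at least one $J$ with $N_J = 1$.

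\textbf{Slice reduction.} Let $H \le T$ be the identity component of $\bigcap_{\alpha_i \in U} \ker \alpha_i$, a $1$-dimensional subtorus dual to $U$. Split $V = V^H \oplus V^\perp$ with $V^H = \Ro^l \oplus \bigoplus_{\alpha_i \in U} V(\alpha_i)$ and $V^\perp = \bigoplus_{\alpha_i \notin U} V(\alpha_i) \cong \Co^{N_J}$. Choose a generic $v \in V^H$ with all nontrivial summands nonzero; then the stabilizer $T_v$ has identity component $H$, and $F := T_v/H$ is a finite abelian group acting on $V^\perp$ by $\Co$-linear isomorphisms. By the slice theorem a neighborhood of $[v]$ in $V/T$ is homeomorphic to
\[
\Ro^{\dim V^H - (k-1)} \times (V^\perp / T_v).
\]
Thus $V/T$ is a manifold (with or without boundary) at $[v]$ if and only if $V^\perp/T_v$ is so at its cone point.

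\textbf{Cohomology of the small quotient.} The space $V^\perp / T_v$ is the open cone on $L_J := S(V^\perp)/T_v = S^{2N_J - 1}/T_v$. The intermediate quotient $S^{2N_J - 1}/H$ is a weighted projective space (the integer weights are the residues of $\alpha_i$ in $\Nq/U \cong \Qo$, all nonzero), so it has the rational cohomology ring of $\CP^{N_J - 1}$. Further quotienting by $F$ preserves rational cohomology because $F$ acts by complex-linear automorphisms, hence trivially on the even-dimensional generators, giving
\[
H^*(L_J; \Qo) \;\cong\; \Qo[x]/(x^{N_J}), \qquad \deg x = 2.
\]
For $V^\perp/T_v$ to be locally Euclidean at the cone point, $L_J$ must be a rational homology sphere; for it to be a half-space at the cone point, $L_J$ must be rationally contractible. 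A truncated polynomial ring $\Qo[x]/(x^{N_J})$ with $N_J \ge 3$ is neither, so both hypotheses force $N_J \le 2$. In the manifold case we also rule out $N_J = 1$: then $V^\perp/T_v = \Co/T_v \cong \Rg$ introduces a half-space factor, contradicting $V/T \cong \Ro^m$ at $[v]$. Hence $N_J = 2$ for every ridge in the manifold case, and $N_J \in \{1,2\}$ in the boundary case. Finally, existence of a boundary ridge in item (2) comes from the converse of the $N_J=1$ observation: a boundary point of $V/T$ must have local model containing a $\Rg$-factor, which can only arise from a slice at a generic point $v \in V^H$ with $V^\perp/T_v \cong \Rg$, that is from a ridge with $N_J = 1$.

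\textbf{Main obstacle.} The delicate point is the possible disconnectedness of the stabilizer $T_v$. A priori, the further quotient of a manifold by a finite group need not be a manifold, which could weaken the link-topology test. I circumvent this by computing only with rational coefficients, where the finite $\Co$-linear action of $F$ on $\CP^{N_J - 1}$ is automatically trivial; the rational cohomology of $L_J$ is then enough to discriminate spheres, points, and genuine weighted projective spaces.
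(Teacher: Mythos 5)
Your core argument is correct and is essentially the paper's own proof: for each ridge $J$ you pass to a generic point of the flat spanned by $J$, apply the slice theorem to get a local model $\Ro^a\times(\Co^{N_J}/T_v)$, and use the (weighted projective space) cohomology of the link to exclude $N_J\geqslant 3$, exactly as the paper does via its circle lemma. Your extra care with the possibly disconnected stabilizer $T_v$ (rational transfer plus the observation that the finite part acts through the connected coordinate torus, hence trivially on cohomology) is a genuine refinement: the paper simply asserts that the stabilizer is the circle $G$, which need not be true, so this step of yours tightens rather than deviates from the published argument. The rest of the facet-count analysis ($N_J=2$ in the manifold case, $N_J\in\{1,2\}$ in the boundary case) is sound.

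The one genuine gap is your last step, the existence of at least one boundary ridge in case (2), which the paper's convention (a pseudomanifold with boundary must have a boundary ridge) does require. Your claim that a boundary point of $V/T$ ``can only arise from a slice at a generic point $v\in V^H$ with $V^\perp/T_v\cong\Rg$'' is false as stated: half-space local models also occur over much deeper strata. For instance, for a complexity zero representation the image of the origin is a boundary point and its local model is $\Rg^n$, which is homeomorphic to $\Rg\times\Ro^{n-1}$, yet the origin is not a generic point of any corank-one flat. What you actually need is that \emph{some} boundary point lies over a generic point of a corank-one flat with exactly one transverse weight, and this requires an argument. Two ways to close it: (a) a stratification/dimension count -- points with finite stabilizer cannot map to boundary points because the finite group acts orientation-preservingly on the slice, so the top local homology of the quotient is nonzero, while images of flats of corank $\geqslant 2$, or of corank one with at least two transverse weights, have dimension at most $m-2$; since $\partial(V/T)$ is a nonempty $(m-1)$-manifold, some boundary point must lie over a ridge flat with $N_J=1$; or (b) argue a posteriori: if every ridge had $N_J=2$, then $K(\alpha)$ would be a closed pseudomanifold, Proposition 3.8 would force the representation to be totally Leontief, and its orbit space would be a manifold without boundary, contradicting $V/T\cong\Rg\times\Ro^{m-1}$ by local homology. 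Either patch is short, but as written this step does not follow.
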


It will be more convenient for us to work with homology manifolds instead of topological manifolds. For a space $Q$, the relative homology modules $H_*(Q,Q\setminus\{x\};R)$ are called \emph{the local homology modules} at a point $x\in Q$ with coefficients in an abelian group $R$. Recall that a locally compact space $Q$ is called \emph{a (closed) $d$-dimensional homology manifold} (over $R$) if, for any $x\in Q$, the local homology modules are isomorphic to those of $\Ro^d$:
\begin{equation}\label{eqHomologyModel}
H_s(Q,Q\setminus\{x\};R)\cong H_s(\Ro^d,\Ro^d\setminus\{0\};R)\begin{cases}
                                                                \cong R, & \mbox{if } s=d; \\
                                                                =0, & \mbox{otherwise}.
                                                              \end{cases}
\end{equation}
A space $Q$ is called \emph{a $d$-dimensional homology manifold with boundary}, if its local homology modules are isomorphic to those of $\Rg\times\Ro^{d-1}$: either vanish (for boundary points) or satisfy~\eqref{eqHomologyModel} (for interior points). The next statement is a direct consequence of K\"{u}nneth formula for relative homology groups.

\begin{lem}\label{lemHomMfd}
Let us fix a ring $R$ of coefficients.
\begin{enumerate}
  \item If $Q$ is a closed homology manifold, then so is $Q\times\Ro^s$.
  \item If $Q$ is a homology manifold with boundary, then so is $Q\times\Ro^s$.
  \item If $Q$ is not a homology manifold (with or without boundary), then neither is $Q\times\Ro^s$.
\end{enumerate}
\end{lem}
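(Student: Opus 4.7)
The plan is to compute the local homology modules of $Q \times \Ro^s$ at an arbitrary point $(x,y)$ via the relative Künneth formula applied to the product of the pairs $(Q, Q\setminus\{x\})$ and $(\Ro^s, \Ro^s \setminus \{y\})$. The key point-set identity is that $(a,b) \neq (x,y)$ is equivalent to $a \neq x$ or $b \neq y$, so
\[
(Q \setminus \{x\}) \times \Ro^s \;\cup\; Q \times (\Ro^s \setminus \{y\}) \;=\; (Q \times \Ro^s) \setminus \{(x,y)\}.
\]
Hence the local homology pair of $Q \times \Ro^s$ at $(x,y)$ coincides with the product of the local pairs at $x$ and $y$ of the two factors.

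Next, I would invoke the Künneth formula for pairs. Since $H_q(\Ro^s, \Ro^s \setminus \{y\}; R) \cong R$ is concentrated in degree $q = s$ and is free over $R$, all $\operatorname{Tor}$ terms vanish and the formula collapses to a natural isomorphism
\[
H_n\bigl(Q \times \Ro^s,\, (Q \times \Ro^s) \setminus \{(x,y)\};\, R\bigr) \;\cong\; H_{n-s}\bigl(Q,\, Q \setminus \{x\};\, R\bigr).
\]
Items (1) and (2) now follow directly from this shift. If $Q$ is a closed $d$-dimensional homology manifold, the local homology of $Q \times \Ro^s$ at every $(x,y)$ is concentrated in degree $d+s$ with value $R$, giving a closed $(d+s)$-dimensional homology manifold. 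If $Q$ is a $d$-dimensional homology manifold with boundary, the same computation preserves the dichotomy: interior points of $Q$ yield points of $Q \times \Ro^s$ with local homology $R$ in degree $d+s$, while boundary points of $Q$ produce points of $Q \times \Ro^s$ with vanishing local homology.

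For item (3) I would argue contrapositively. If $Q \times \Ro^s$ were a $d'$-dimensional homology manifold with or without boundary, then at every $(x,y)$ the local homology would be either zero in all degrees or concentrated in degree $d'$ with value $R$. By the shifted isomorphism above, the local homology of $Q$ at every $x$ would then be either zero or concentrated in degree $d' - s$ with value $R$, so $Q$ itself would be a $(d'-s)$-dimensional homology manifold (with or without boundary), contradicting the hypothesis.

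The only nontrivial step is the applicability of the relative Künneth formula to the pair $(Q, Q\setminus\{x\})$, given that $Q$ is only assumed to be locally compact rather than CW or ANR. I would handle this by first excising to a small compact neighborhood of $x$ in $Q$ (and similarly for $y$), reducing to a well-behaved pair on which the Eilenberg--Zilber theorem for pairs applies; alternatively, one verifies directly that the cover $\{(Q\setminus\{x\}) \times \Ro^s,\, Q \times (\Ro^s \setminus \{y\})\}$ is excisive, which is enough to run the product chain complex argument. Once this technicality is dispatched, the three items are formal consequences of the degree-shift isomorphism.
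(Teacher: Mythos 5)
Your argument is correct and is essentially the paper's own: the paper proves this lemma by exactly the same degree-shift computation, simply citing the Künneth formula for relative homology (the text states the lemma is "a direct consequence of Künneth formula for relative homology groups"). The technical worry at the end is in fact harmless, since $(Q\setminus\{x\})\times\Ro^s$ and $Q\times(\Ro^s\setminus\{y\})$ are open in $Q\times\Ro^s$, so the couple is automatically excisive and the relative Künneth theorem for singular homology applies without any CW or ANR hypotheses on $Q$.
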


The next technical lemma is needed for the proof of Proposition~\ref{propMfdPseudomfd}.

\begin{lem}\label{lemCircle}
Consider a $T^1$-representation on $V\cong\Ro^{2n}$, $n\geqslant 1$, with no trivial component. Then we have an alternative.
\begin{enumerate}
  \item $n=1$, $\Co^1/T^1$ is homeomorphic to $\Rg$.
  \item $n=2$, $\Co^2/T^1$ is homeomorphic to $\Ro^3$.
  \item $n\geqslant 3$, $\Co^n/T^1$ is not a homology manifold (neither closed nor a homology manifold with boundary) over any $R$.
\end{enumerate}
\end{lem}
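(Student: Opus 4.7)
The cases $n=1$ and $n=2$ are immediate. For $n=1$ and a nonzero weight $a$, the map $z\mapsto|z|$ descends to a homeomorphism $\Co/T^1\toiso\Rg$. For $n=2$, both weights $\alpha_1,\alpha_2\in\Nq\cong\Qo^1$ are nonzero, so any $n-1=1$ of them is linearly independent, the general position condition of Definition~\ref{definComplOneGenPos} is satisfied, and Proposition~\ref{propLocalQuotient} yields $\Co^2/T^1\cong\Ro^3$.

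For $n\geqslant 3$ I argue via a local homology computation at the origin. Since weights are defined up to sign I may take them to be positive integers $a_1,\ldots,a_n$. The action commutes with radial dilation, so $Q\eqd\Co^n/T^1$ is homeomorphic to the open cone on $Y\eqd S^{2n-1}/T^1$, with the origin projecting to the apex $p$. Since $Q\setminus\{p\}$ deformation retracts onto $Y$ while $Q$ is contractible, the long exact sequence of the pair $(Q,Q\setminus\{p\})$ gives
\[
H_i(Q,Q\setminus\{p\};R)\cong\Hr_{i-1}(Y;R)\quad\text{for all }i\geqslant 1.
\]
If $Q$ were a homology manifold of some dimension at $p$ -- either closed or with boundary -- the left-hand side would be nonzero in at most one degree. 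The strategy is therefore to exhibit two different degrees in which this local homology is nonzero for every nonzero coefficient ring $R$.

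The space $Y$ is the weighted projective space $\CP(a_1,\ldots,a_n)$. Since the $T^1$-action on $S^{2n-1}$ has only finite stabilizers, the projection from the Borel construction $ET^1\times_{T^1}S^{2n-1}\to Y$ has rationally acyclic fibres (classifying spaces of finite cyclic groups) and is thus a rational cohomology equivalence. In the Serre spectral sequence of the Borel fibration $S^{2n-1}\hookrightarrow ET^1\times_{T^1}S^{2n-1}\to BT^1$ the only possibly nonzero differential $d_{2n}$ sends the top class $y\in H^{2n-1}(S^{2n-1};\Qo)$ to the equivariant Euler class $(a_1\cdots a_n)\,x^n\in H^{2n}(BT^1;\Qo)$, which is nonzero because every $a_i\ne 0$. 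Hence $H^*(Y;\Qo)\cong\Qo[x]/(x^n)$ with $|x|=2$. For $n\geqslant 3$ this is nontrivial in degrees $2$ and $4$, so $\Hr_2(Y;\Zo)$ and $\Hr_4(Y;\Zo)$ both have positive rank, and by the universal coefficient theorem $\Hr_2(Y;R)$ and $\Hr_4(Y;R)$ are nonzero for any nonzero ring $R$. Consequently $H_3(Q,Q\setminus\{p\};R)$ and $H_5(Q,Q\setminus\{p\};R)$ are simultaneously nonzero, which is incompatible with $Q$ being a homology manifold at $p$, either closed or with boundary, over any~$R$. The main technical ingredient is the rational cohomology calculation for $Y$; the remaining steps are routine bookkeeping around the cone structure.
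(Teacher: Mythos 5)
Your proof is correct, and for items (1)--(2) and the overall skeleton of item (3) it coincides with the paper's argument: both identify $\Co^n/T^1$ with the open cone on the weighted projective space $\CP^{n-1}(a_1,\ldots,a_n)$ and obstruct the homology-manifold property by local homology at the apex. The difference lies in the key input. The paper simply cites Kawasaki's theorem that $\CP^{n-1}(a)$ has the homology of $\CP^{n-1}$ over any coefficients, and then points out that the single nonvanishing module $H_3(Q,Q\setminus\{0\};R)\cong R$ sits in the wrong degree for the $(2n-1)$-dimensional space $Q$ when $n\geqslant 3$. You instead compute $H^*(\CP^{n-1}(a);\Qo)\cong\Qo[x]/(x^n)$ by hand, via the almost-free Borel construction and the Euler-class differential in the sphere-bundle spectral sequence, and then pass to arbitrary nonzero $R$ through the universal coefficient theorem; exhibiting \emph{two} nonvanishing local homology degrees ($3$ and $5$) lets you conclude without ever pinning down the dimension of the putative homology manifold, which is a small gain in rigor over the paper's implicit use of $\dim Q=2n-1$. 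The trade-off: the paper's route gets the full statement in one citation, while yours is self-contained modulo the standard facts that an almost-free quotient map $ET^1\times_{T^1}S^{2n-1}\to S^{2n-1}/T^1$ is a rational cohomology equivalence (Vietoris--Begle with $B(\text{finite group})$ fibres) and that the transgression of the fibre class is the equivariant Euler class $(a_1\cdots a_n)x^n$; both of these are standard and your use of them is sound.
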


\begin{proof}
Item (1) is straightforward, see Example~\ref{exCompl0}). Item (2) follows from Proposition~\ref{propLocalQuotient}. The additional details concerning item (2) are provided in Section~\ref{secMonopoles}. We concentrate on item (3).

Since there is no trivial component, we have $V\cong V(\alpha_1)\oplus\cdots\oplus V(\alpha_n)$, where $\alpha_1,\ldots,\alpha_n\in\Hom(T^1,T^1)$ is a collection of nonzero integers. In the complex coordinates associated with the irreducible summands $V(\alpha_i)$, the representation takes the form
\[
t(z_1,\ldots,z_n)=(t^{\alpha_1}z_1,\ldots,t^{\alpha_n}z_n).
\]
Restricting this action to the unit sphere $S^{2n-1}=\{\sum_{i=1}^{n}|z_i|^2=1\}$ we get the weighted projective space $\CP^{n-1}(\alpha)=\CP^{n-1}(\alpha_1,\ldots,\alpha_n)$ as the orbit space. Therefore $\Co^n/T^1$ is an open cone $\Cone\CP^{n-1}(\alpha)$ with an apex denoted by $0$. We have
\[
H_j(\Cone\CP^{n-1}(\alpha),\Cone\CP^{n-1}(\alpha)\setminus\{0\};R)\cong H_{j-1}(\CP^{n-1}(\alpha);R).
\]
The weighted projective space $\CP^{n-1}(\alpha)$ has the same homology as an ordinary projective space $\CP^{n-1}$, over any $R$~\cite{Kawa}. Therefore we have a nonvanishing local homology module $H_3(\Co^n/T,(\Co^n/T)\setminus\{0\};R)\cong H_2(\CP^{n-1})\cong R$ which is an obstruction for the $(2n-1)$-dimensional space $\Co^n/T$ to be a homology manifold when $n\geqslant 3$.
\end{proof}

Now we can prove Proposition~\ref{propMfdPseudomfd} by reducing it to the circle case.

\begin{proof}
Since the trivial component of the action does not affect the statement we assume for simplicity that there is no trivial component.

Consider any ridge $J=\{j_1,\ldots,j_{k-1}\}\in K(\alpha)$. Recall that the multiset $\alpha=\{\alpha_1,\ldots,\alpha_r\}$ linearly spans $\Nq\cong\Qo^k$. Let $\Pi_J\subset\Nq$ be the rational hyperplane spanned by $\alpha_{j_1},\ldots,\alpha_{j_{k-1}}$. We partition all weights' indices into two disjoint classes: $[r]=A_J\sqcup B_J$, one for the weights lying in $\Pi_J$, and another for the weights transversal to $\Pi_J$:
\[
A_J=\{j\in[r]\mid \alpha_j\in \Pi_J\},\qquad B_J=[r]\setminus A_J.
\]
Notice that the set $B_J$ consists of all indices $i$ such that $\{\alpha_j\mid j\in \{i\}\sqcup J\}$ is linearly independent and has rank $k$. Therefore $B_J$ parameterizes the ways to complement the ridge $J$ to the facet in $K(\alpha)$. Hence
\begin{equation}\label{eqClaimBj}
  |B_J| \text{ equals the number of facets containing } J.
\end{equation}

Consider the decomposition of $V=V_A\oplus V_B$ into two summands corresponding to the partition of the weights:
\[
V_A=\bigoplus_{i\in A}V(\alpha_i)\cong \Co^{|A_J|}, \qquad  V_B=\bigoplus_{i\in B}V(\alpha_i)\cong \Co^{|B_J|}
\]
Notice that $V_A$ is the fixed point set of the 1-dimensional toric subgroup
\[
G=\Ker\prod_{j\in J}\alpha_j\colon T^k\to T^{k-1}=\Ker\prod_{j\in A_J}\alpha_j\colon T^k\to T^{|A_J|}
\]
(more precisely, we take the connected component of $1$ in these kernels to avoid disconnected groups). A general fact is described in Section~\ref{secLowerFaces}: the flats of the linear matroid $\alpha$ are in bijective correspondence with the fixed point sets of toric subgroups acting on $V$. Here we apply this correspondence to the flat $A_J$ of the matroid of weights.

Now we can summarize the idea of proof as follows. If we take a generic point $x$ in $V_A$, its tangent space decomposes as the sum of the tangent and normal components (parallel to $V_A$ and $V_B$ respectively). Then, informally, the $T^k$-action in vicinity of $x$ splits into ``the product'' of the $T^k/G$-action on the tangent component and the $G$-action on the normal component. Since $x$ is generic in $V_A$, the action of $T^k/G$ is free on the tangent component, so the orbit space of the tangent space is a manifold, and does not affect the local topology of the orbit space by Lemma~\ref{lemHomMfd}. The $G$-action on the normal component is a circle representation on $\Co^{|B_J|}$. We are in position to apply Lemma~\ref{lemCircle}. In the manifold case, this lemma implies $|B_J|=2$, and in the boundary case it implies $|B_J|=1$ which proves the required statement according to~\eqref{eqClaimBj}.

In order to justify this argument, the Slice Theorem should be applied. Let $x$ be a point in $V_A\subset V$ such that all its coordinates in this subspace are nonzero. For example, one can take
\[
x=(\underbrace{1,\ldots,1}_{A_J},\underbrace{0,\ldots,0}_{B_J}).
\]
Let $\tau_xV$, $\tau_xV_A$, and $\nu_x$ be respectively the tangent space to $V$, the tangent space to $V_A$, and the normal space of the embedding $V_A\subset V$ taken at the point $x$. Obviously, $\tau_xV=\tau_xV_A\oplus \nu_x$, $\tau_xV_A\cong \Co^{A_J}$ and $\nu_x\cong\Co^{B_J}$. The stabilizer $T_x$ of the point $x$ is the circle $G$ introduced above, so the orbit $T^kx$ is $(k-1)$-dimensional. The Slice Theorem states that the orbit $T^kx$ has a $T^k$-invariant neighborhood $U$ equivariantly diffeomorphic to
\[
T^k\times_G(\tau_xV/\tau_xT^kx)
\]
Let $[x]\in V/T^k$ denote the class of the point $x$ in the orbit space. Then $[x]$ has an open neighborhood in $V/T^k$ equal to
\begin{equation}\label{eqLocalPatch}
U/T^k\cong (T^k\times_G(\tau_xV/\tau_xT^kx))/T^k=(\tau_xV/\tau_xT^kx)/G.
\end{equation}
Notice that the whole orbit $T^kx$ lies inside $V_A$, so $\tau_xT^kx\subset\tau_xV_A$. Moreover, since $G$ is the stabilizer of $V_A$, the $G$-action on the whole subspace $\tau_xV_A$ is trivial. Therefore the $G$-action on $\tau_xV/\tau_xT^kx$ has the same weights as the $G$-action on $\tau_xV/\tau_xV_A=\nu_x$. On the other hand, the $G$-action on $\nu_x\cong \Co^{|B_J|}$ is nontrivial (its weights are the projections of $\{\alpha_j\mid j\in B\}$ under the induced map $N=\Hom(T^k,T^1)\to \Hom(G,T^1)\cong \Zo$, and these projections are nonzero by the construction of $B$). Therefore, applying Lemma~\ref{lemCircle} to the representation in~\eqref{eqLocalPatch} we see that the manifold case implies $|B_J|=2$ and the boundary case implies $|B_J|=1$ as desired.
\end{proof}

By Remark~\ref{remWedges}, any independence complex $K(\alpha)$ is homotopically equivalent to a wedge of spheres. If, moreover, it is a pseudomanifold, there is a fundamental class in the top homology group, so we necessarily have one sphere in the wedge. If $K(\alpha)$ is a pseudomanifold with boundary, then the wedge consists of zero spheres, hence in this case $|K(\alpha)|$ should be contractible. It happens that the condition of being both a matroid and a pseudomanifold puts even a stronger restriction on the combinatorics of a complex as proved by Provan and Billera in~\cite{BilProv}.

\begin{prop}[\cite{BilProv}]\label{propBillProv}\mbox{}
\begin{enumerate}
  \item If $K$ is an independence complex of a matroid and, at the same time, a closed pseudomanifold, then $K$ is isomorphic to a join of boundaries of simplices, and, probably, a ghost complex.
  \item If $K$ is an independence complex of a matroid and, at the same time, a pseudomanifold with boundary, then $K$ is isomorphic to a join of a simplex, boundaries of simplices, and, probably, a ghost complex.
\end{enumerate}
\end{prop}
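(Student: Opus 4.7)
The plan is to translate the pseudomanifold condition on $K=K(M)$ into a combinatorial condition on the cocircuits of the matroid $M$ and then to exploit matroid duality to classify the possibilities. First I would reduce to the case where $M$ is connected and loop-free: loops of $M$ are exactly the ghost vertices of $K(M)$ and can be stripped off and reattached at the end, while a matroid direct-sum decomposition $M=M_1\oplus M_2$ yields a join decomposition $K(M)=K(M_1)\ast K(M_2)$. In both the closed and the boundary settings, the join of two pseudomanifolds of the correct pure dimensions is again a pseudomanifold, and conversely each factor of such a join inherits the pseudomanifold property by a direct ridge count.

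The second step is a translation of the combinatorics. A ridge of $K(M)$ is an independent set $I$ of size $\rk(M)-1$, and the facets of $K(M)$ containing $I$ are in bijection with the elements outside the hyperplane $\mathrm{cl}(I)$; their number equals $n-|\mathrm{cl}(I)|$, where $n$ is the size of the ground set. Since every hyperplane of $M$ arises as $\mathrm{cl}(I)$ for some such $I$, the closed pseudomanifold condition says that every hyperplane of $M$ has size $n-2$, while the bounded condition says that every hyperplane has size $n-2$ or $n-1$. Passing to complements, this is the statement that every cocircuit of $M$ has size exactly $2$ (closed case), or size $1$ or $2$ (boundary case).

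The third step is dualization. A cocircuit of size $1$ is a coloop, so in the boundary case I would peel off the (necessarily nonempty) set of coloops as a direct summand $U_{a,a}$ that contributes a simplex factor $\Delta_a$ to $K(M)$ and leaves a complementary summand whose cocircuits all have size $2$; this reduces the boundary setting to the closed one. For a connected loop-free matroid $M$ with all cocircuits of size $2$, the dual $M^*$ is connected with all circuits of size $2$. Size-$2$ circuits are parallel pairs, parallelism is an equivalence relation on the ground set (a quick circuit-elimination argument), and any loop-free matroid whose circuits are exactly the parallel pairs is the direct sum $\bigoplus_i U_{1,k_i}$ indexed by parallel classes. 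Connectedness forces a single class, so $M^*=U_{1,n}$ and hence $M=U_{n-1,n}$, whose independence complex is $\partial\Delta_n$. Reassembling these pieces gives the stated classification.

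The main obstacle to be careful about is the dualization step: it relies on the matroid-theoretic facts that connectedness is preserved under duality and that a loop-free matroid with only size-$2$ circuits is a direct sum of rank-one uniform matroids. Both are standard, but they are where the argument really bites; everything else is a dictionary between the simplicial language (ridges, facets, pure dimension) and the matroid language (hyperplanes, bases, rank) via the hyperplane-ridge correspondence.
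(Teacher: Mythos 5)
Your argument is correct, but note that the paper itself does not prove Proposition~\ref{propBillProv}: it is quoted from Billera--Provan, whose published treatment is phrased in terms of their theory of Leontief complexes, so there is no in-paper proof to match yours against. Your route is a clean self-contained derivation, and the key steps check out: the dictionary identifying the number of facets over a ridge $I$ with $n-|\mathrm{cl}(I)|$, hence with the size of the cocircuit complementary to the hyperplane $\mathrm{cl}(I)$ (every hyperplane does arise this way, and matroid complexes are pure, so ridges behave as claimed); peeling off the coloops, which in the boundary case exist because the paper's convention requires at least one boundary ridge, as a free summand $U_{a,a}$ giving the single simplex factor; and the duality step, where coloop-freeness makes $M^*$ loop-free with all circuits of size $2$, the circuit-elimination argument makes parallelism transitive, and the rank function shows such a matroid is $\bigoplus_i U_{1,k_i}$, so dually $M\cong\bigoplus_i U_{k_i-1,k_i}$ and $K(M)$ is the join of the $\partial\Delta_{k_i}$. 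One small streamlining: the reduction to connected matroids (and the appeal to preservation of connectedness under duality) is dispensable, since duality commutes with direct sums, so the parallel-class decomposition of $M^*$ already yields the full join decomposition of $K(M)$ in one stroke; what your version buys instead is that each join factor is identified with a connected component of the matroid, which is a slightly sharper structural statement.
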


\begin{rem}\label{remGhostIsComplexityOne}
Note that the ghost complex on one vertex can be formally considered as the boundary of 0-dimensional simplex. So it will not be a mistake to remove the mention of ghost simplex from the formulation of Proposition~\ref{propBillProv}.
\end{rem}

To finalize the proof of Theorem~\ref{thmAll} it remains to make a simple terminological observation.

\begin{rem}\label{remCorrespondence}
Recall that any collection of vectors (weights) $\alpha$ gives rise to the independence complex $K(\alpha)$. Properties of simplicial complexes translate to weight systems as follows.
\begin{enumerate}
  \item There is an operation of direct sum of matroids. If $\alpha\subset\Qo^k$, and $\beta\subset\Qo^l$, then the direct sum is defined $\alpha\sqcup\beta\subset\Qo^k\times\Qo^l\cong\Qo^{k+l}$, where $\alpha$ sits in the first summand, and $\beta$ sits in the second. Then $K(\alpha\sqcup\beta)=K(\alpha)\ast K(\beta)$. Vice versa, if $K(\alpha)$ splits as the join of two independence complexes, then the weights of $\alpha$ split in two groups lying in transversal vector subspaces, corresponding to the join factors. Recalling that the ambient vector spaces in our considerations are $\Nq=\Hom(T,T^1)\otimes\Qo$, it is seen that the join operation on the simplicial complexes corresponds to the direct product of torus representations.
  \item A simplex $\Delta_A$ is an independence complex of a linearly independent set in $\Nq$. This situation corresponds to representations of complexity zero, see Example~\ref{exCompl0}.
  \item A boundary of simplex $\dd\Delta_A$ is an independence complex of a weight system $\alpha_1,\ldots,\alpha_{|A|}$ where every $|A|-1$ vectors are independent, but the whole system is not. These are the weights of complexity one representations in general position by Definition~\ref{definComplOneGenPos}.
  \item Ghost vertices resemble loops in a matroid. They correspond to zero weights, in other words, the trivial component of the action. In accordance with Remark~\ref{remGhostIsComplexityOne}, a trivial torus action on $\Co$ (or $\Ro$) can be considered as a degenerate case of complexity one torus action in general position.
\end{enumerate}
\end{rem}

Theorem~\ref{thmAll} now follows from Propositions~\ref{propMfdPseudomfd} and~\ref{propBillProv} and Remark~\ref{remCorrespondence}.

\section{Torus actions}\label{secTorusActions}

\begin{con}\label{conFaceSubmanifolds}
Consider a smooth action of a torus $T$ on a connected smooth manifold $X$. If $H\subset T$ is a connected subgroup, any connected component $Y$ of the fixed point submanifold $X^H$ is called \emph{an invariant submanifold} of the action. Since $T$ is commutative, invariant submanifolds are stable under $T$-action. The dimension of the generic toric orbit on $Y$ is called \emph{the rank} of an invariant submanifold $Y$. If $Y\cap X^T\neq\varnothing$ (i.e. $Y$ contains a $T$-fixed point), then $Y$ is called \emph{a face submanifold} of the torus action.

The collection of all face submanifolds in $X$ is a poset (graded by the ranks) which we denote by $S(X)$. The poset $S(X)$ has the greatest element, the manifold $X$ itself. All minimal elements have rank $0$, these are the connected components of the fixed point set~$X^T$.

The orbit space $Y/T$ of a face submanifold $Y$ is called \emph{a face} of the action. Faces are subspaces of the orbit space $X/T$. Obviously, they are partially ordered by inclusion, and the poset of faces is naturally identified with $S(X)$. The poset $S(X)$ of faces carries a lot of useful information about the torus action as evidenced by the next examples.
\end{con}

\begin{ex}
If $X$ is a smooth complete toric variety, $S(X)$ is isomorphic to the poset of cones of its fan ordered by reversed inclusion. In particular, the Betti numbers of $X$ are determined by the combinatorics of $S(X)$ since they coincide with the $h$-numbers of the corresponding simplicial sphere. Similar statement holds for topological generalizations of toric varieties: quasitoric manifolds~\cite{DJ} and equivariantly formal torus manifolds~\cite{MasPan}.
\end{ex}

\begin{ex}
In~\cite{AyzCompl,AyzMasEquiv} the combinatorics and topology of the poset $S(X)$ was described for torus actions of complexity one in general position with isolated fixed points. In particular, it was proved in~\cite{AyzMasEquiv}, that the Betti numbers of equivariantly formal manifolds with the listed properties are determined by the poset $S(X)$.
\end{ex}

\begin{rem}
If a torus action on $X$ is equivariantly formal and has isolated fixed points, we do not know if the poset $S(X)$ determines the Betti numbers of $X$ in general.
\end{rem}

\begin{rem}
The study of general properties of the face posets of torus actions with isolated fixed points was initiated by the first author in~\cite{AyzCherep,AyzMasSolo}. Nontrivial examples of such posets related to regular semisimple Hessenberg varieties appeared in~\cite{AyzBuchGraph}.
\end{rem}

The assumption that a face submanifold should intersect the fixed point set allows to localize consideration of orbit spaces to the vicinity of fixed points. In the vicinity of fixed points the action can be linearized and reduced to the study of torus representations. Under appropriate assumptions about fixed points, we can prove a smooth version of Theorem~\ref{thmAll}. For convenience we introduce the following notion.

\begin{defin}\label{definLeontActn}
A $T$-action on a smooth manifold $X$ is called \emph{a Leontief (totally Leontief) action}, if, for any fixed point $x\in X^T$, the tangent representation $\tau_xT$ is a Leontief (resp. totally Leontief) representation.
\end{defin}

The action is called non-totally Leontief if it Leontief but not totally Leontief. Equivalently, all fixed points have Leontief tangent representations but at least one of these tangent representations is not totally Leontief.

\begin{prop}\label{propOrbitsLeontief}
Let a torus $T$ act smoothly on a connected closed smooth manifold $X$. Assume that each invariant submanifold of $X$ is a face submanifold, in other words, each invariant submanifold contains a fixed point. The the following statements hold.
\begin{enumerate}
  \item The action is totally Leontief if and only if $X/T$ is a closed topological manifold.
  \item The action is non-totally Leontief if and only if $X/T$ is a topological manifold with boundary.
  \item The action is non-Leontief if and only if $X/T$ is not a topological manifold.
\end{enumerate}
\end{prop}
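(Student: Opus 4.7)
Being a topological manifold (closed or with boundary) is a local condition, so by the Slice Theorem a neighborhood of $[x]\in X/T$ is homeomorphic to $\nu_x/T_x$, where $T_x$ is the stabilizer of $x$ and $\nu_x$ is the slice (normal to the orbit $Tx$ at $x$). The proof thus reduces to determining, for every $x\in X$, when $\nu_x/T_x$ is a (half-)manifold.

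At a fixed point $y\in X^T$ one has $T_y=T$ and $\nu_y=\tau_yX$, so Theorem~\ref{thmAll} immediately gives the three equivalences at $[y]$: $\tau_yX/T$ is a closed manifold iff $\tau_yX$ is totally Leontief, a manifold with boundary iff non-totally Leontief, and not a manifold iff non-Leontief. This already proves one direction of each item (``only if'' for items 1 and 2, ``if'' for item 3), since Leontief-ness at \emph{every} fixed point is necessary for $X/T$ to have the respective property everywhere.

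For the converse, I must show that if all tangent representations at fixed points are of the required type, then $\nu_x/T_x$ has the right local type also at non-fixed $x$. Here the hypothesis is essential: the connected component $Y$ of $X^{T_x^0}$ containing $x$ is an invariant submanifold, hence contains a fixed point $y\in X^T$. Decompose $\tau_yX=\tau_yY\oplus\xi_y$, where $T_x^0$ acts trivially on the first summand and without trivial subrepresentation on the second, and analogously $\nu_x=\nu_x^{T_x^0}\oplus\nu_x^{\perp}$. Since $\xi_y$ and $\nu_x^{\perp}$ are fibers of the equivariant normal bundle of $Y\subset X$, they are isomorphic as $T_x^0$-representations. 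By Lemma~\ref{lemHomMfd} the local topology of $\nu_x/T_x$ at $[x]$ (respectively of $\tau_yX/T$ at $[y]$) reduces to that of $\nu_x^{\perp}/T_x^0\cong\xi_y/T_x^0$, times Euclidean factors coming from the remaining directions, where the almost-free $T/T_x^0$-action contributes the Euclidean factor. Thus the local type of $X/T$ at $[x]$ matches that at $[y]$, and Theorem~\ref{thmAll} closes the argument.

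The main obstacle is this last matching step. It requires two checks: first, that the trivial $T_x^0$-part of the tangent representation at $y$ contributes only Euclidean factors to the orbit space (accounting for the finite component group $T_x/T_x^0$ as well); and second, that the Leontief property of $\tau_yX$ restricts cleanly to the sub-representation $\xi_y$ of $T_x^0$, so that ``Leontief at $y$'' propagates to ``Leontief at $x$''. This combinatorial step is governed by the Provan--Billera classification in Proposition~\ref{propBillProv}: restricting the join of simplex boundaries describing the weight matroid to a submatroid cut out by a subtorus yields again a join of simplex boundaries and at most one simplex, i.e. again a Leontief weight system, with the totally Leontief property being preserved precisely when no simplex factor is present.
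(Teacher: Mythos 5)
Your overall strategy (slice theorem, the hypothesis producing a fixed point $y$ on the invariant submanifold $Y$ through $x$, reduction to Theorem~\ref{thmAll}) matches the paper's, but your transfer step from $y$ back to $x$ has two genuine gaps, and they sit exactly where the difficulty lies. First, the finite isotropy: the relevant quotient is $\nu_x/T_x$, not $\nu_x/T_x^0$, and the component group $\Gamma=T_x/T_x^0$ acts, possibly nontrivially, on both summands $\nu_x^{T_x^0}$ and $\nu_x^{\perp}$. Your assertion that the $T_x^0$-trivial part contributes ``only Euclidean factors, accounting for $\Gamma$ as well'' is precisely what has to be proved: a finite group quotient of a Euclidean space need not be Euclidean (e.g. $\Ro^3/\{\pm 1\}$ is a cone over $\RP^2$), and nothing in your purely local analysis at $x$ rules such behaviour out. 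Second, the combinatorial propagation is misstated: the weights of $T_x^0$ on $\xi_y\cong\nu_x^{\perp}$ are restrictions of the original characters to the subtorus, i.e.\ images of a subset of the weights under the projection $\Nq\to\Hom(T_x^0,T^1)\otimes\Qo$, not a submatroid of $K(\alpha)$; projection creates new linear dependencies, so Proposition~\ref{propBillProv} does not apply in the way you invoke it. (The statement you want --- for a flat $A$ of a Leontief weight system the projected complementary weights are again Leontief, totally Leontief if the original was --- is true, but it needs a separate argument with flats of joins of simplex boundaries, and even then it only controls the quotient by $T_x^0$, returning you to the first gap.)

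The paper's proof sidesteps both issues by arguing in the linearized chart rather than slice-by-slice: the local structure of $X/T$ is constant along the connected orbit-type stratum of $x$ (same stabilizer, hence same slice representation), and one moves $x$ along $Y$ into the chart around the fixed point $y$, where Theorem~\ref{thmAll} asserts that the \emph{entire} orbit space $\tau_yX/T$ is $\Ro^l$ or a half-space; consequently every point of that chart, whatever its (possibly disconnected) isotropy, already has a Euclidean or half-space neighborhood, and $[x]$ inherits one. To repair your argument you should either prove directly that $\nu_x/T_x$ (finite part included) is Euclidean or a half-space, or, as in the paper and in Thm.~2.10 of~\cite{AyzCompl}, identify a neighborhood of $[x]$ with a neighborhood of a suitable point of $\tau_yX/T$ and let Theorem~\ref{thmAll} carry the whole load.
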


\begin{proof}
The proof repeats~\cite[Thm.2.10]{AyzCompl} so we only sketch a general idea. In the vicinity of a fixed point $x$, the orbit space is homeomorphic to $\tau_xX/T$ so the statement follows from Theorem~\ref{thmAll}. If $x'$ is any other point, then $x'$ lies in a principal orbit of some invariant submanifold $Y$. Since $Y$ contains some fixed point $x$, we can continuously move $x'$ until we get in the vicinity of $x$. Since $[x]$ has a neighborhood in $X/T$ homeomorphic to an open disc (or a halfspace), the same holds for the orbit class $[x']$.
\end{proof}

\begin{rem}
The assumption that each invariant submanifold is a face submanifold may seem complicated and hard to check in practice. However, most actions automatically satisfy this property. All actions with $H^{\odd}(X)=0$ have this property as follows from~\cite[Lm.2.2]{MasPan}. In particular, equivariantly formal torus actions with isolated fixed points have the property. Algebraic torus actions on smooth projective varieties satisfy this assumption according to Bialynicki-Birula theory, see details in~\cite{AyzCompl}.
\end{rem}

\section{Faces of Leontief representations}\label{secLowerFaces}

If a torus representation $T\to\GL(V)$ is given, all invariant submanifolds of $V$, in the sense of Construction~\ref{conFaceSubmanifolds}, are $T$-invariant vector subspaces of $V$. All of them are face submanifolds, since they necessarily contain the fixed point $0\in V$. It is not very difficult to describe the combinatorial condition for a $T$-invariant vector subspace of $V$ to be a face submanifold. Let us recall the notion of flats of a linear matroid.

\begin{con}
Let $\alpha=\{\alpha_1,\ldots,\alpha_m\}$ be a linear matroid, that is a multiset of vectors in some vector space $W$. A subset $\{i_1,\ldots,i_s\}\subseteq[m]$, or the corresponding submultiset $A=\{\alpha_{i_1},\ldots,\alpha_{i_s}\}$, is called \emph{a flat} of the linear matroid if $A$ is an intersection of $\alpha$ with some vector subspace $\Pi\subset W$. The dimension of the linear span of $A$ is called the rank of a flat $A$. The flats of the matroid $\alpha$ are partially ordered by inclusion, they form a graded poset, which is called \emph{the geometric lattice} of the matroid $\alpha$ and is denoted $\Flats(\alpha)$.
\end{con}

In~\cite{AyzCherep} we observed the following

\begin{prop}
Let $V=\bigoplus_{i=1}^rV(\alpha_i)\oplus\Ro^{m-2r}$ be a representation of the torus with the weights $\alpha$. Then all face submanifolds of $V$ have the form
\[
\bigoplus_{\alpha_i\in A}V(\alpha_i)\oplus\Ro^{m-2r}
\]
where $A$ is a flat of the rational matroid of weights $\alpha\subset\Nq$. Therefore, in particular, the poset $S(V)$ is isomorphic to the geometric lattice $\Flats(\alpha)$.
\end{prop}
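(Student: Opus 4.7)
The plan is to reduce the statement to straightforward linear algebra by using the duality between connected subtori of $T$ and rational subspaces of $\Nq$. Since $V$ is a linear representation, for any closed subgroup $H\subseteq T$ the fixed set $V^H$ is a linear subspace of $V$, in particular connected, and it automatically contains the origin $0\in V^T$. Therefore every invariant submanifold in the sense of Construction~\ref{conFaceSubmanifolds} coincides with $V^H$ for some connected subtorus $H\subseteq T$, and every invariant submanifold is automatically a face submanifold.

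Next, I would describe $V^H$ explicitly in terms of the weights. The subtorus $H$ acts on the irreducible summand $V(\alpha_i)$ through the character $\alpha_i|_H$, so this summand lies pointwise in $V^H$ if and only if the weight $\alpha_i$ vanishes on $H$. The annihilator $H^\perp\subset\Nq$ of a connected subtorus $H$ is a rational vector subspace, and, conversely, every rational subspace $\Pi\subseteq\Nq$ is the annihilator of a unique connected subtorus $H_\Pi\subseteq T$. Consequently
\[
V^H=\bigoplus_{\alpha_i\in H^\perp}V(\alpha_i)\oplus\Ro^{m-2r},
\]
and the submultiset $A_H=\{\alpha_i\mid \alpha_i\in H^\perp\}$ is exactly $\alpha\cap H^\perp$, which is a flat of the linear matroid of $\alpha$ by definition.

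It remains to verify that the assignment $H\mapsto A_H$ is a bijection of posets onto $\Flats(\alpha)$. Given a flat $A$, let $\Pi$ be the $\Qo$-linear span of the weights in $A$; this is a rational subspace, and because $A$ is a flat we have $A=\alpha\cap\Pi$. Setting $H=H_\Pi$ then recovers $A_{H_\Pi}=A$. Distinct flats yield distinct fixed subspaces because the isotypical decomposition of $V$ into the summands $V(\alpha_i)$ is canonical up to reordering of the weights. The correspondence is inclusion preserving: the maps $A\mapsto\Pi$ and $H\mapsto V^H$ each reverse inclusions, while $\Pi\mapsto H_\Pi$ also reverses inclusions, and the composite of three such reversals is order preserving. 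This yields the required poset isomorphism $S(V)\cong\Flats(\alpha)$.

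The main obstacle is mostly bookkeeping of the duality between subtori and subspaces of $\Nq$, and in particular the fact that only connected subtori feature in the definition of invariant submanifolds, so one must systematically replace any (possibly disconnected) stabilizer by its identity component. Apart from this, the argument is elementary and I do not anticipate any deeper difficulty.
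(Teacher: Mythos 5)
Your argument is correct and takes essentially the route the paper itself relies on (the paper defers this proof to~\cite{AyzCherep}): every invariant submanifold of a linear representation is a fixed subspace $V^H$ of a connected subtorus, it automatically contains the fixed point $0$, and under the order-reversing duality between connected subtori $H\subseteq T$ and rational subspaces of $\Nq$ the weights appearing in $V^H$ form exactly the flat $\alpha\cap H^\perp$, with every flat arising this way. One small slip in your final bookkeeping: the span map $A\mapsto\Pi$ \emph{preserves} inclusions rather than reversing them, and a composite of three order-reversing maps would be order-reversing, not order-preserving; the correct count is one preserving map followed by the two reversing ones ($\Pi\mapsto H_\Pi$ and $H\mapsto V^H$), or more simply one can note directly that for flats $A\subseteq A'$ holds if and only if $\bigoplus_{\alpha_i\in A}V(\alpha_i)\subseteq\bigoplus_{\alpha_i\in A'}V(\alpha_i)$.
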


This statement was proved in the work~\cite{AyzCherep} under the assumption that the trivial component $\Ro^{m-2r}$ vanishes, however, the proof follows the same lines in the general case.

\begin{ex}\label{exCompl0Lattice}
If the representation of $T^n$ on $V\cong\Co^n$ is a representation of complexity zero, then $\alpha=\{\alpha_1,\ldots,\alpha_n\}$ is a basis of $\Nq\cong\Qo^n$. In this case every subset of $\alpha$ is a flat, so the poset $S(V)\cong\Flats(\alpha)$ is the boolean lattice $\B_n$.
\end{ex}

\begin{ex}\label{exCompl1Lattice}
If the representation of $T^{n-1}$ on $V=\Co^n$ is a representation of complexity zero, then we have a collection of $n$ weights $\alpha=\{\alpha_1,\ldots,\alpha_n\}$ in $\Nq\cong\Qo^{n-1}$. Every subset $A\subseteq\alpha$ is a flat unless $|A|= n-1$. Let us denote the resulting poset by $\Sp_{n-1}$:
\[
\Sp_{n-1}=\{A\subseteq[n]\mid |A|\neq n-1\}.
\]
This poset is isomorphic to the boolean lattice $\B_n$ with all coatoms removed.
\end{ex}

Recall from Definition~\ref{definLeontRepres} that the product representation of $T^d\times\prod_{i=1}^{s}T^{n_i-1}$ on $V=\Ro^l\times\Co^d\times\prod_{i=1}^{s}\Co^{n_i}$ is called a Leontief representation. We call it a Leontief representation of type $(d,\underline{n},l)=(d,\{n_1,\ldots,n_s\},l)$. Since the product of matroids induces the product of the corresponding geometric lattices, we get the following consequence of Examples~\ref{exCompl0Lattice} and~\ref{exCompl1Lattice}.

\begin{prop}\label{propFacePosetLeontief}
For a Leontief representation $V$ of type $(d,\underline{n},l)$, the face poset $S(V)$ is isomorphic to
\[
\B_d\times\prod_{i=1}^s\Sp_{n_i-1}.
\]
Let $D, N_1,\ldots,N_s$ be disjoint sets of cardinalities $d,n_1,\ldots,n_s$ respectively. Then each face submanifold of $V$ is encoded by a string $(A_0,A_1,\ldots,A_s)$, where
\[
A_0\subseteq D, \text{ and for all } i\in[s]=\{1,\ldots,s\} \text{ we have } A_i\subseteq N_i, |A_i|\neq n_i-1.
\]
\end{prop}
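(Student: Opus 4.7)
The plan is to combine the unnamed proposition above (identifying $S(V)$ with $\Flats(\alpha)$) with the matroid-theoretic observations of Remark~\ref{remCorrespondence} and Examples~\ref{exCompl0Lattice},~\ref{exCompl1Lattice}. First, I would unpack the weight system of a Leontief representation of type $(d,\underline{n},l)$: the trivial summand $\Ro^l$ contributes no nonzero weights (and therefore plays no role in $\Flats(\alpha)$), the complexity zero factor $\Co^d$ contributes a basis $\beta=\{\beta_j\mid j\in D\}$ of a rational subspace $\Qo^d\subset\Nq$, and each complexity one factor $\Co^{n_i}$ contributes a weight multiset $\gamma^{(i)}=\{\gamma^{(i)}_j\mid j\in N_i\}$ spanning its own rational subspace $\Qo^{n_i-1}\subset\Nq$ with the property that every $n_i-1$ of its elements are independent. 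By construction these $1+s$ rational subspaces are transversal and together span $\Nq$.

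Next I would invoke item (1) of Remark~\ref{remCorrespondence}: transversality of the spans implies that $\alpha$ is the matroid direct sum $\beta\sqcup\gamma^{(1)}\sqcup\cdots\sqcup\gamma^{(s)}$, and direct sums of matroids yield direct products of their lattices of flats. Hence
\[
\Flats(\alpha)\cong \Flats(\beta)\times\prod_{i=1}^s\Flats(\gamma^{(i)}).
\]
For $\Flats(\beta)$ (basis case), every subset $A\subseteq D$ equals the intersection of $\beta$ with the span of its own elements, so every subset is a flat and $\Flats(\beta)\cong \B_d$, reproducing Example~\ref{exCompl0Lattice}. For $\Flats(\gamma^{(i)})$, a subset $A\subseteq N_i$ of size $\leqslant n_i-2$ has rank $|A|$ and equals the intersection of $\gamma^{(i)}$ with its span (no other weight can lie in a proper subspace, by general position); the full set $N_i$ is a flat of rank $n_i-1$; but any $A$ of size $n_i-1$ spans all of $\Qo^{n_i-1}$, so its closure in the matroid is the entire $N_i$ and $A$ is not a flat. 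This gives $\Flats(\gamma^{(i)})\cong\Sp_{n_i-1}$, matching Example~\ref{exCompl1Lattice}.

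Combining these and using the earlier identification $S(V)\cong\Flats(\alpha)$ (which holds even in presence of a trivial summand, as noted after that proposition) yields the required isomorphism
\[
S(V)\cong \B_d\times\prod_{i=1}^s\Sp_{n_i-1}.
\]
The encoding of a face submanifold by a string $(A_0,A_1,\ldots,A_s)$ with $A_0\subseteq D$ and $A_i\subseteq N_i$, $|A_i|\neq n_i-1$, is then just the tautological description of elements in this product poset, where the corresponding face submanifold is $\bigoplus_{j\in A_0}V(\beta_j)\oplus\bigoplus_{i=1}^s\bigoplus_{j\in A_i}V(\gamma^{(i)}_j)\oplus\Ro^l$.

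There is no real obstacle here; the only point requiring a brief verification is that in the complexity one factor the subsets of size exactly $n_i-1$ fail to be flats, which is precisely where the general position hypothesis of Definition~\ref{definComplOneGenPos} is used. Once this is checked, the rest is formal bookkeeping with matroid direct sums.
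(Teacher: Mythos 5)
Your argument is correct and is essentially the paper's own: the paper derives this proposition in one line from the identification $S(V)\cong\Flats(\alpha)$, the fact that a direct sum of matroids yields a product of geometric lattices, and Examples~\ref{exCompl0Lattice} and~\ref{exCompl1Lattice}, which is exactly the decomposition you carry out (with the useful extra detail of checking, via general position, that subsets of size $n_i-1$ are not flats). No substantive difference in approach.
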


In toric topology, the structure of the induced torus action on the face submanifolds sometimes plays an important role. If an effective action of $T$ on $X$ has rank $k$, and $Y\subset X$ is a face submanifold of rank $l$, then the induced action of $T$ on $Y$ has noneffective kernel of dimension $k-l$. We may quotient out this noneffective kernel.

It should be noted that the class of Leontief representations is closed under taking faces.

\begin{lem}\label{lemFaceIsLeontief}
Consider a Leontief representation $V$ of type $(d,\{n_1,\ldots,n_s\},l)$, and let $U$ be a face submanifold of $V$ corresponding to the string $(A_0,A_1,\ldots,A_s)$ as in Proposition~\ref{propFacePosetLeontief}. Let $\ca{M}=\{i\in[s]\mid |A_i|=n_i\}$. Then $U$ is a Leontief representation of type $(d',\underline{n'},l)$ where
\[
\underline{n'}=\{n_i\mid i\in \ca{M}\},\text{ and } d'=|A_0|+\sum_{i\in[s]\setminus \ca{M}}|A_i|.
\]
\end{lem}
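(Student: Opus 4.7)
The plan is to apply Proposition~\ref{propFacePosetLeontief} to write $U$ as an explicit $T$-invariant subspace, and then compute the effective induced action factor by factor. Since the acting torus $T^d\times\prod_{i=1}^{s}T^{n_i-1}$ is a direct product in which the weights of each factor are supported in its own summand of the dual lattice, the effective action on $U$ will also decompose as a product indexed by the sets $A_0,A_1,\ldots,A_s$. This reduces the lemma to a factor-by-factor computation.

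First I would write
\[
U=\Ro^l\oplus\bigoplus_{j\in A_0}V(\alpha_j)\oplus\bigoplus_{i=1}^{s}\bigoplus_{j\in A_i}V(\alpha_j^{(i)}),
\]
where $\{\alpha_j\}_{j\in D}$ is the weight basis of the complexity zero factor $\Co^d$ (see Example~\ref{exCompl0}) and $\{\alpha_j^{(i)}\}_{j\in N_i}$ are the $n_i$ weights of the $i$-th complexity one factor in $\Qo^{n_i-1}$. On the summand $\bigoplus_{j\in A_0}V(\alpha_j)$ the weights form a subset of a basis of the $T^d$-weight lattice and are hence linearly independent, so the effective quotient of $T^d$ acts on $\Co^{|A_0|}$ as a complexity zero representation.

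Next I would treat each $i$-th complexity one block, splitting on whether $i\in\ca{M}$. If $i\in\ca{M}$, i.e.\ $|A_i|=n_i$, then the entire $\Co^{n_i}$ with its complexity one in general position $T^{n_i-1}$-action is preserved inside $U$, so it contributes a complexity one factor of the same type $n_i$. If $i\notin\ca{M}$, the constraint $|A_i|\neq n_i-1$ forces $|A_i|\leqslant n_i-2$; by Definition~\ref{definComplOneGenPos} any $n_i-1$ of the vectors $\alpha_j^{(i)}$ are linearly independent, so any subset of size at most $n_i-2$ is linearly independent. Hence $\{\alpha_j^{(i)}\}_{j\in A_i}$ spans an $|A_i|$-dimensional rational subspace, and the effective quotient of $T^{n_i-1}$ acts on $\Co^{|A_i|}$ as a complexity zero representation.

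Finally I would amalgamate the various complexity zero pieces into one. A product of complexity zero representations is again complexity zero, because their weights, each supported in a transversal summand of the total weight lattice of $T$, together form a basis of their span. This packages the complexity zero contributions from $A_0$ and from the $i\notin\ca{M}$ factors into a single complexity zero representation on $\Co^{d'}$ with $d'=|A_0|+\sum_{i\in[s]\setminus\ca{M}}|A_i|$. Together with the untouched trivial summand $\Ro^l$ and the surviving complexity one factors indexed by $\ca{M}$, this realises $U$ as a Leontief representation of type $(d',\underline{n'},l)$ in the sense of Definition~\ref{definLeontRepres}. The only genuinely substantive step is the general position dichotomy $|A_i|\in\{0,\ldots,n_i-2\}\cup\{n_i\}$; the rest is bookkeeping of effective quotient tori on direct summands.
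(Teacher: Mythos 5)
Your proof is correct and follows essentially the same route the paper has in mind: the paper dismisses the proof as ``a straightforward examination of flats in the weight matroid,'' and your factor-by-factor analysis of the weights on the flat $(A_0,A_1,\ldots,A_s)$ --- using that any $\leqslant n_i-2$ weights of a general-position complexity one factor are independent, while $|A_i|=n_i$ preserves the whole factor --- is exactly that examination, spelled out.
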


In other words, the complexity one component $A_i$, $i=1,\ldots,s$, of the string either contributes to a complexity one component (if $|A_i|=n_i$), or contributes to the complexity zero component (if $|A_i|\leqslant n_i-2$). The lemma is proved by a straightforward examination of flats in the weight matroid of a Leontief representation.

Lemma~\ref{lemFaceIsLeontief} immediately implies

\begin{cor}\label{corFaceIsLeontief}
The induced action on a face submanifold of a Leontief action is Leontief.
\end{cor}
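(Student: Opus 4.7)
The plan is to deduce the corollary directly from Lemma~\ref{lemFaceIsLeontief} by passing to tangent representations at fixed points. Suppose $T$ acts on $X$ by a Leontief action, and let $Y\subseteq X$ be a face submanifold. By Construction~\ref{conFaceSubmanifolds}, $Y$ is a connected component of the fixed point set $X^H$ for some connected subgroup $H\subseteq T$, and, being a face submanifold, $Y$ contains at least one $T$-fixed point. To verify that the induced action on $Y$ is Leontief, I need to check that for each fixed point $y$ of this induced action, the corresponding tangent representation is Leontief in the sense of Definition~\ref{definLeontRepres}.

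First I would observe that the fixed points of the induced action on $Y$ coincide with $Y\cap X^T$. Fix any such $y$. Since $X$ is a Leontief action, the tangent representation $\tau_yX$ is a Leontief representation of $T$ in the sense of Definition~\ref{definLeontRepres}. The tangent space $\tau_yY\subseteq\tau_yX$ is a $T$-invariant vector subspace, and since $Y$ is a component of $X^H$, the subspace $\tau_yY$ coincides with $(\tau_yX)^H$, i.e.\ $\tau_yY$ is a face submanifold of the linear representation $\tau_yX$ in the sense of Construction~\ref{conFaceSubmanifolds}. By Proposition~\ref{propFacePosetLeontief}, $\tau_yY$ is therefore encoded by a string $(A_0,A_1,\ldots,A_s)$, and Lemma~\ref{lemFaceIsLeontief} asserts that the resulting subrepresentation is itself Leontief (of type $(d',\underline{n'},l)$ as described there).

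The only remaining subtlety is the fact that the torus acting effectively on $Y$ is a quotient $T/K$ (where $K$ is the noneffective kernel for the induced action), rather than $T$ itself. However, Leontiefness depends only on the weight matroid of the representation up to the torus quotient: the weights of the $T/K$-action on $\tau_yY$ are the images of the weights of the $T$-action on $\tau_yY$ under the projection $N\to\Hom(T/K,S^1)$, and this projection is an isomorphism after restriction to the linear span of the relevant weights (precisely because $K$ acts trivially on $\tau_yY$). Thus the decomposition of $\tau_yY$ into trivial, complexity zero, and complexity one in general position summands that Lemma~\ref{lemFaceIsLeontief} provides for the $T$-action descends verbatim to a Leontief decomposition for the effective $T/K$-action.

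Running this argument over every fixed point $y\in Y\cap X^T$ shows the induced action on $Y$ is Leontief by Definition~\ref{definLeontActn}. The step that could potentially cause trouble is the identification of $\tau_yY$ with a flat-indexed summand of $\tau_yX$, but this is immediate from the standard correspondence between $T$-invariant subspaces of a linear representation and flats of the weight matroid, recorded in Proposition~\ref{propFacePosetLeontief}. Consequently no new combinatorial input is needed beyond Lemma~\ref{lemFaceIsLeontief}, and the corollary follows.
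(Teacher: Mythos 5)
Your argument is correct and follows the paper's intended route: the paper derives Corollary~\ref{corFaceIsLeontief} immediately from Lemma~\ref{lemFaceIsLeontief}, and your write-up simply makes explicit the linearization step (fixed points of the induced action are $Y\cap X^T$, $\tau_yY=(\tau_yX)^H$ is a face submanifold of the Leontief representation $\tau_yX$, and passing to the effective quotient torus leaves the weight matroid, hence Leontiefness, unchanged). The only nitpick is directional: the characters of $T/K$ form a sublattice of $N$ (those vanishing on $K$) rather than a projection target, but this does not affect the argument.
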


Recall that a face of an action is the orbit space of a face submanifold. Corollary~\ref{corFaceIsLeontief}, Theorem~\ref{thmAll}, and Proposition~\ref{propOrbitsLeontief} imply the following result.

\begin{prop}\label{propMfdFaceMfd}
Let a torus $T$ act smoothly on a closed smooth manifold $X$. Assume that each invariant submanifold of $X$ is a face submanifold. If the orbit space $X/T$ is a topological manifold (either closed or with boundary) then each face of the action is also a topological manifold (either closed or with boundary).
\end{prop}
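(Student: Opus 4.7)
The proof assembles the three results \textit{Corollary~\ref{corFaceIsLeontief}}, \textit{Theorem~\ref{thmAll}}, and \textit{Proposition~\ref{propOrbitsLeontief}} in a straightforward way; the only nonobvious point is a compatibility check between face submanifolds of $X$ and of $Y$.

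The plan is as follows. First, since $X/T$ is a topological manifold (possibly with boundary) and the hypothesis of Proposition~\ref{propOrbitsLeontief} is satisfied, that proposition tells us the $T$-action on $X$ is a Leontief action, i.e.\ the tangent representation $\tau_x X$ at every fixed point $x\in X^T$ is a Leontief representation. Next, pick a face submanifold $Y\subseteq X$ and a fixed point $y\in Y\cap X^T$ (such a point exists by the definition of a face submanifold). By Construction~\ref{conFaceSubmanifolds}, $Y$ is a connected component of $X^{H_0}$ for some connected subgroup $H_0\subseteq T$, so $\tau_y Y=(\tau_y X)^{H_0}$ is a $T$-invariant subspace of the Leontief representation $\tau_y X$ which, in the language of Section~\ref{secLowerFaces}, is exactly a face submanifold of the representation $\tau_y X$ (i.e.\ it corresponds to a flat of the weight matroid). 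By Corollary~\ref{corFaceIsLeontief}, $\tau_y Y$ is again a Leontief representation. Since this holds at every $T$-fixed point of $Y$, the induced $T$-action on $Y$ (and equivalently the effective action of $T$ modulo its noneffective kernel on $Y$) is Leontief.

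To apply Proposition~\ref{propOrbitsLeontief} to $Y$ and conclude that $Y/T$ is a topological manifold (with or without boundary), it remains to verify that every invariant submanifold of $Y$ is a face submanifold of $Y$. Let $Z$ be a connected component of $Y^{H_1}$ for some connected subgroup $H_1\subseteq T$. I claim $Z$ is a connected component of $X^{\langle H_0,H_1\rangle}$. Indeed, let $Z'$ be the connected component of $X^{\langle H_0,H_1\rangle}$ containing $Z$. Since $Z'\subseteq X^{H_0}$ is connected and meets $Y$, and $Y$ is a connected component of $X^{H_0}$, we have $Z'\subseteq Y$; as $Z'\subseteq X^{H_1}$ we in fact get $Z'\subseteq Y^{H_1}$, and since $Z'$ is connected and contains $Z$, it lies in the component $Z$, giving $Z'=Z$. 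Thus $Z$ is an invariant submanifold of $X$, and by the hypothesis on $X$ it contains a $T$-fixed point, which is automatically a fixed point of the induced action on $Y$. So every invariant submanifold of $Y$ is a face submanifold of $Y$.

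Combining the two points above, Proposition~\ref{propOrbitsLeontief} applied to the $T$-action on $Y$ yields that $Y/T$ is a topological manifold (closed or with boundary, depending on whether the induced action is totally Leontief or only Leontief). Since $Y$ was an arbitrary face submanifold of $X$, every face of the action is a topological manifold, completing the argument. The only genuinely delicate point is the bookkeeping in the preceding paragraph showing that a component of $Y^{H_1}$ is also a component of the larger fixed set $X^{\langle H_0,H_1\rangle}$; everything else is a direct invocation of earlier results.
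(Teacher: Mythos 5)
Your argument is correct and follows exactly the route the paper intends: Proposition~\ref{propOrbitsLeontief} to see the action is Leontief, Lemma~\ref{lemFaceIsLeontief}/Corollary~\ref{corFaceIsLeontief} to see the induced action on a face submanifold $Y$ is again Leontief, and Proposition~\ref{propOrbitsLeontief} applied to $Y$ to conclude. Your verification that a component of $Y^{H_1}$ is a component of $X^{H_0H_1}$ (so that $Y$ inherits the hypothesis that invariant submanifolds are face submanifolds) is precisely the detail the paper leaves implicit, and it is carried out correctly.
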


\begin{ex}
For actions of complexity zero (which are particular cases of Leontief actions), the orbit space is a manifold with boundary. All its faces are also manifolds with boundary except for the minimal elements of $S(X)$. These minimal elements are the connected components of $X^T$: these are closed manifolds\footnote{Abusing notation we identify $X^T$ with $X^T/T$}. Note that an isolated point is considered a closed manifold not a manifold with boundary.
\end{ex}

\begin{ex}
For actions of complexity one in general position, all proper face submanifolds (all except $X$ itself) have complexity $0$. The local structure of faces in the vicinity of a fixed point was described in~\cite{AyzCompl} and axiomatized in the notion of \emph{a sponge}.
\end{ex}

Finally, we make a simple observation which relates Leontief actions to the work of Cherepanov~\cite{Cher} on complexity one actions in non-general position.

\begin{lem}
Every representation of complexity one is a Leontief representation.
\end{lem}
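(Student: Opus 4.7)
The plan is to read the Leontief structure directly off the matroid of weights. For a complexity one representation, Definition~\ref{definComplexity} gives $r = k+1$ weights $\alpha_1, \ldots, \alpha_{k+1}$ spanning $\Nq \cong \Qo^k$, so the space of linear relations among the $\alpha_i$'s is one-dimensional; I would fix a generating relation $\sum_{i=1}^{k+1} c_i \alpha_i = 0$ and partition $[k+1] = S \sqcup T$, where $S = \{i : c_i \neq 0\}$.

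The first step is to identify the independence complex as $K(\alpha) = \dd\Delta_S \ast \Delta_T$. This follows from the fact that every linear relation among the weights is a scalar multiple of the distinguished one: a subset $A \subseteq [k+1]$ supports a nonzero relation if and only if $S \subseteq A$, so $A$ is independent if and only if $S \not\subseteq A$. The same uniqueness argument simultaneously shows that the $T$-weights are linearly independent and that every $(|S|-1)$-element subset of $\{\alpha_i : i \in S\}$ is linearly independent, since a smaller-support relation would give a second, independent, relation in a one-dimensional space.

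The second step is a direct application of Remark~\ref{remCorrespondence}: the join decomposition $K(\alpha) = \dd\Delta_S \ast \Delta_T$ translates to a direct product of a complexity one representation in general position on $\bigoplus_{i \in S} V(\alpha_i)$, by Definition~\ref{definComplOneGenPos}, with a complexity zero representation on $\bigoplus_{i \in T} V(\alpha_i)$, by Example~\ref{exCompl0}. Together with the trivial summand $\Ro^{m-2r}$ of the original representation, this matches Definition~\ref{definLeontRepres} exactly. The only delicate point I anticipate is aligning the rational splitting $\Nq = \Nq^S \oplus \Nq^T$ with a product decomposition of the acting torus; this is absorbed by the weak-equivalence framework of Section~\ref{secDefinitions}, and degenerate cases such as $|S| = 2$ (two parallel weights, vacuously in general position) or $T = \varnothing$ (pure complexity one in general position, no complexity zero factor) fit the formalism automatically, in view of Remark~\ref{remGhostIsComplexityOne}.
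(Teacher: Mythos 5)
Your proposal is correct and follows essentially the same route as the paper: the paper's proof also fixes the unique (up to scalar) linear relation $c_1\alpha_1+\cdots+c_n\alpha_n=0$ and splits the weights into the support of the relation (a complexity one system in general position) and its complement (a complexity zero system). Your extra pass through the join decomposition $K(\alpha)=\dd\Delta_S\ast\Delta_T$ and Remark~\ref{remCorrespondence} just makes explicit what the paper leaves implicit.
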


\begin{proof}
A representation of complexity one is characterized by $n$ weights $\alpha_1,\ldots,\alpha_n$ in the $(n-1)$-dimensional vector space $\Nq\cong\Qo^{n-1}$. Hence there is a unique (up to multiplier) linear relation $c_1\alpha_1+\cdots+c_n\alpha_n=0$. The weights' subset $\{\alpha_i\mid c_i\neq0\}$ corresponds to a complexity one action in general position, while the remaining weights correspond to an action of complexity $0$.
\end{proof}

\begin{cor}
For an action of complexity one in non-general position, the orbit space is a topological manifold with boundary. All its faces are topological manifolds, either closed or with boundary.
\end{cor}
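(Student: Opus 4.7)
The plan is to combine the preceding Lemma with Proposition \ref{propOrbitsLeontief} (to read off the orbit space from the tangent representations) and Proposition \ref{propMfdFaceMfd} (to transfer the conclusion to every face). The preceding Lemma turns the representation-theoretic hypothesis into a Leontief structure at each fixed point; what remains is to promote this to an action-level statement and to identify when the action is non-totally Leontief.

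First I would check that the action itself is Leontief in the sense of Definition \ref{definLeontActn}. At every fixed point $x \in X^T$ the tangent representation $\tau_x X$ decomposes as a sum of two-dimensional irreducibles and a trivial summand, and its nontrivial part has complexity at most one because the global action has complexity one (the number of nontrivial weights at $x$ cannot exceed the rank of the acting torus plus the global complexity). Representations of complexity zero are Leontief by definition, and representations of complexity one are Leontief by the preceding Lemma; therefore $\tau_x X$ is always a Leontief representation, which is exactly the definition of a Leontief action.

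Next I would extract non-totally Leontiefness from the non-general-position hypothesis. ``Non-general position'' means that at some fixed point $x_0$ the tangent representation fails to be a product of complexity one general position pieces; concretely, either the nontrivial part of $\tau_{x_0} X$ has complexity zero (so $x_0$ sits on a positive-dimensional fixed submanifold), or it has complexity one but the unique linear relation on its weights has a vanishing coefficient $c_i$. In the first case the Leontief decomposition of $\tau_{x_0} X$ trivially has a nontrivial complexity zero summand; in the second, the construction used in the proof of the preceding Lemma packs the weights with $c_i = 0$ into a complexity zero summand. Either way $\tau_{x_0} X$ is non-totally Leontief, so by Proposition \ref{propOrbitsLeontief}(2) the orbit space $X/T$ is a topological manifold with boundary.

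The assertion about the faces then follows formally: Corollary \ref{corFaceIsLeontief} says that every face submanifold of a Leontief action carries an induced Leontief action, and applying Proposition \ref{propOrbitsLeontief} to each face gives exactly the content of Proposition \ref{propMfdFaceMfd}. The one delicate point is the standing hypothesis of Proposition \ref{propOrbitsLeontief} that every invariant submanifold be a face submanifold; this is not forced by complexity one alone, but the remark after that proposition shows it holds under the standard assumptions (equivariantly formal actions with isolated fixed points, algebraic actions on smooth projective varieties) under which Cherepanov's framework operates, so no additional work is needed in that context.
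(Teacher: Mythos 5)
Your argument is correct and follows exactly the route the paper intends: the corollary is an immediate consequence of the preceding lemma (every complexity one representation is Leontief) together with Proposition~\ref{propOrbitsLeontief} and Proposition~\ref{propMfdFaceMfd} (via Corollary~\ref{corFaceIsLeontief}). Your additional checks --- that non-general position at some fixed point forces a nontrivial complexity zero factor, hence a non-totally Leontief tangent representation, and that the standing hypothesis on invariant submanifolds holds in Cherepanov's setting --- simply make explicit what the paper leaves implicit.
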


\appendix

\section{Leontief substitution systems}\label{secLeontief}

In this section we explain the term Leontief representation by making a survey about Leontief substitution systems and drawing some analogies between the combinatorial theory which appeared in mathematical economics and representations described Section~\ref{secDefinitions}. It essentially follows the results of~\cite{BilProv}, however we provide some details which may be of use for researchers in toric topology.

Let $A$ be a real matrix with $g$ rows and $f$ columns, and $b\in\Ro^g$ be a column vector. Consider the convex polyhedron $P$ determined by the system
\begin{equation}\label{eqPolyhedron}
Ax=b,\quad x\geqslant 0.
\end{equation}

\begin{defin}\label{defLeonSys}
System~\eqref{eqPolyhedron} (and the corresponding polyhedron $P$) is called \emph{a Leontief substitution system} if $b\geqslant 0$, each column of $A$ contains at most one positive entry, and $P$ is nonempty. If, moreover, the polyhedron $P$ is bounded, then it is called \emph{a totally Leontief substitution system}.
\end{defin}

\begin{rem}\label{remEconomicsExplained}
Leontief work influenced the field of mathematical economics. In particular, the general task of linear programming was, to much extent motivated by the Leontief models. Originally~\cite{Leon} Leontief systems were introduced to model the following setting.

Assume that we have $g$ commodities (``goods''), and $f$ production sites (``factories''). In a production cycle each factory consumes some commodities and either produces a new commodity, or does not produce anything at all. The production cycle of the $j$-th factory is therefore characterized by some column vector $(a_{1,j},a_{2,j},\ldots,a_{g,j})^t$ where $a_{i,j}$ is the output of $i$-th resource during a cycle. Since all resources, probably except one, are consumed by the factory rather than produced, all entries $a_{i,j}$, probably except one, are nonpositive. Then the system~\eqref{eqPolyhedron} with the $g\times f$-matrix $A=(a_{i,j})$ is a Leontief system. It solves the problem of finding the necessary amount of production cycles $x=(x_1,\ldots,x_f)^t\geqslant0$ for each factory, in order to get the prescribed vector of goods $b=(b_1,\ldots,b_g)^t\geqslant0$. The term ``substitution'' in the definition of Leontief systems refers to the fact that one resource can be produced by several factories: the production of this resource can be substituted (in the economical sense rather than in the mathematical). This means that a positive entry can appear on the same position in several columns of $A$.

The polyhedron $P$, which is the set of all solutions to~\eqref{eqPolyhedron}, may be further used in linear programming, if one needs to minimize a total cost of production given by a linear function. Therefore, the combinatorial structure of the polyhedron $P$ of a Leontief system is of particular importance: for example, it allows to estimate the complexity of the simplex-method for the optimization task.
\end{rem}

System~\eqref{eqPolyhedron} is called \emph{nondegenerate} if the polyhedron $P$ of its solutions is simple.

\begin{ex}\label{exSimplices}
The system $Ax=b$ where $b=(1,\ldots,1)^t$ and $A$ is given by
\[
\begin{tikzpicture}[decoration=brace]
\matrix (m)[matrix of math nodes, left delimiter={(}, right delimiter={)}] {
 1 & \cdots & 1 & 0 & \cdots & 0 & \cdots & 0 & \cdots & 0 & 0 & \cdots & 0 \\
   & \underline{0} &  & 1 & \cdots & 1 &  &  & \underline{0} &  & 0 &  & 0 \\
   & \vdots &  &  &  &  & \ddots &  &  &  & \vdots & & \vdots \\
   & \underline{0} &  &  & \underline{0} &  &  & 1 & \cdots & 1 & 0 & \cdots & 0\\
};
\draw[decorate,transform canvas={yshift=0.5em},thick]
(m-1-1.north) -- node[above=2pt] {$k_1$} (m-1-3.north);
\draw[decorate,transform canvas={yshift=0.5em},thick]
(m-1-4.north) -- node[above=2pt] {$k_2$} (m-1-6.north);
\draw[decorate,transform canvas={yshift=0.5em},thick]
(m-1-8.north) -- node[above=2pt] {$k_s$} (m-1-10.north);
\draw[decorate,transform canvas={yshift=0.5em},thick]
(m-1-11.north) -- node[above=2pt] {$d$} (m-1-13.north);
\end{tikzpicture}
\]
is the simplest example of a Leontief substitution system. The polyhedron of solutions in this case is the cartesian product of standard simplices and a nonnegative cone: $P=\Delta^{k_1-1}\times\cdots\times\Delta^{k_s-1}\times\Rg^d$. This system is nondegenerate.

This example have relation to torus actions as follows. Consider the exponential map $\exp\colon\Ro^f\to T^f$ given by
\[
\exp(x_1,\ldots,x_f)=(\exp(2\pi\sqrt{-1}x_1),\ldots,\exp(2\pi\sqrt{-1}x_f)).
\]
Let $\Pi$ be the solution to the homogeneous system $Ax=0$ with the matrix $A$ written above. Then $\exp(\Pi)\subset T^f$ is a subtorus given by the relations
\[
\prod_{i=1}^{k_1}t_i=1,\quad\prod_{i=k_1+1}^{k_1+k_2}t_i=1,\quad\cdots,\quad\prod_{i=k_1+\cdots+k_{s-1}+1}^{k_1+\cdots+k_s}t_i=1.
\]
Restricting the standard representation of $T^f$ on $\Co^f$ to the subtorus $\exp(\Pi)$, we obtain the product of $s$ complexity one actions in general position and the standard representation of rank $d$. So far the orbit space $\Co^f/\exp(\Pi)$ is a manifold whenever the Leontief polyhedron $P$ is bounded, otherwise $\Co^f/\exp(\Pi)$ has boundary.
\end{ex}

Surprisingly, nondegenerate totally Leontief substitution systems are combinatorially equivalent to the system described in Example~\ref{exSimplices}. The following statements are proved in~\cite{BilProv}.

\begin{prop}\label{propBillProvEconomy}
Consider a nondegenerate totally Leontief substitution system with a polyhedron $P$ of feasible solutions. If $P$ is bounded, then $P$ is combinatorially equivalent to a product of simplices.
\end{prop}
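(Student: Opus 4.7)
The strategy is to pass to the simplicial complex dual to the simple polytope $P$, recognize it as a matroid independence complex by exploiting the Leontief structure, and finish by invoking Proposition~\ref{propBillProv}(1). Since $P$ is simple of dimension $d=f-\rk A$ (by nondegeneracy) and bounded, its boundary is combinatorially dual to a simplicial $(d-1)$-sphere $\Sigma$. Vertices of $\Sigma$ correspond to facets of $P$, hence to indices $j\in[f]$ for which $\{x_j=0\}\cap P$ is a facet; facets of $\Sigma$ correspond to vertices of $P$ and are naturally identified with the sets $N_v:=[f]\setminus B_v$, where $B_v\subset[f]$, $|B_v|=g$, is the feasible basis at a vertex $v$.

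The key claim is that the feasible bases are precisely the transversals of a canonical partition of $[f]$. After reducing to $b>0$ (discard rows with $b_i=0$) and deleting columns without a positive entry (these cannot lie in any feasible basis when $b>0$), put $F_i:=\{j:a_{ij}>0\}$, yielding a partition $[f]=F_1\sqcup\cdots\sqcup F_g$. For any feasible basis $B$ and any row $i$, the identity $\sum_{j\in B}a_{ij}v_j=b_i>0$ with all $v_j>0$ forces some $j\in B$ with $a_{ij}>0$; since $|B|=g$ and each column of $A$ has a unique positive entry, $B$ must be a transversal. Conversely, for a transversal $B$, permuting rows makes $A_B$ have positive diagonal and nonpositive off-diagonal. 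Boundedness of $P$ forces the recession cone $\{x\geqslant 0:Ax=0\}$ to be trivial, and restricting to $B$-coordinates gives $\{x_B\geqslant 0:A_Bx_B=0\}=\{0\}$; together with the sign structure of $A_B$, this is the classical productivity condition of Leontief input--output theory and implies that $A_B$ is nonsingular with $A_B^{-1}\geqslant 0$ entrywise. Hence $A_B^{-1}b\geqslant 0$ and $B$ is a feasible basis.

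It follows that $\Sigma$ is the simplicial complex with vertex set contained in $F_1\sqcup\cdots\sqcup F_g$ whose facets are the complements of transversals of $(F_1,\ldots,F_g)$; equivalently, its faces are precisely those subsets meeting each $F_i$ in a proper subset. This is the independence complex of the partition matroid, namely the join $\dd\Delta_{F_1}\ast\cdots\ast\dd\Delta_{F_g}$ (with ghost vertices occurring exactly at those factors with $|F_i|=1$, corresponding to the columns that lie in every feasible basis). Alternatively one may observe that $\Sigma$ is simultaneously a matroid independence complex and a simplicial sphere, so Proposition~\ref{propBillProv}(1) directly gives this join. Polytope duality then translates the join back into the product $\Delta^{|F_1|-1}\times\cdots\times\Delta^{|F_g|-1}$, giving the desired combinatorial equivalence.

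The main obstacle is the converse direction in the second paragraph --- that boundedness promotes every transversal to a feasible basis. This is a matrix-theoretic assertion, namely that a square matrix with positive diagonal and nonpositive off-diagonal entries whose nonnegative null-cone is trivial must be nonsingular with entrywise nonnegative inverse. It is exactly at this step that the Leontief sign pattern and the boundedness of $P$ combine essentially; the remainder of the argument then reduces to routine polytope/matroid duality combined with Proposition~\ref{propBillProv}(1).
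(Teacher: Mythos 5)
You have reconstructed, rather than found in the paper, an argument for this statement: the paper itself gives no proof but quotes it from Billera--Provan, and your skeleton (feasible bases are exactly the transversals of the partition $[f]=F_1\sqcup\cdots\sqcup F_g$ by the rows carrying the positive entries, hence the dual complex of the simple polytope $P$ is the partition matroid $\dd\Delta_{F_1}\ast\cdots\ast\dd\Delta_{F_g}$ and $P$ is a product of simplices) is indeed the route of the cited source. However, the step you yourself single out as the crux rests on a false lemma. A matrix with positive diagonal, nonpositive off-diagonal entries and trivial nonnegative null-cone need \emph{not} have a nonnegative inverse: $M=\left(\begin{smallmatrix}1&-2\\-2&1\end{smallmatrix}\right)$ is nonsingular (so $\{x\geqslant 0: Mx=0\}=\{0\}$ trivially), yet $M^{-1}=-\frac13\left(\begin{smallmatrix}1&2\\2&1\end{smallmatrix}\right)$ is entrywise negative. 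The genuine productivity (Hawkins--Simon) condition is the existence of $x\geqslant 0$ with $A_Bx>0$, equivalently ``$x\geqslant 0$, $A_Bx\leqslant 0$ imply $x=0$'', and restricting the recession cone of $P$ to the $B$-coordinates only yields the weaker kernel statement, which is insufficient. So the assertion that boundedness promotes every transversal to a feasible basis is not proved; to repair it one must use boundedness and feasibility of the \emph{whole} system: if $A_B$ is not an M-matrix there is a nonzero $x_B\geqslant 0$ with $A_Bx_B\leqslant 0$, and one must then argue, using the remaining columns and the nonemptiness of $P$, that this deficit can be completed to a nonzero element of $\{x\geqslant 0: Ax=0\}$, contradicting boundedness. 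That global argument is precisely the content of the Veinott/Billera--Provan results being quoted, and it is the missing ingredient here.

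The preliminary reductions are also invalid as stated. Deleting rows with $b_i=0$ (and then columns with no remaining positive entry) changes $P$: for the system $x_1+x_2-x_3=1$, $-x_1+x_3=0$, $-x_1-x_3=0$, $x\geqslant 0$, which is a nondegenerate totally Leontief system with $P=\{(0,1,0)\}$ a point, your procedure outputs $x_1+x_2=1$, a segment --- a different combinatorial type. Relatedly, ``all $v_j>0$ for $j\in B_v$'' is LP-nondegeneracy of the basic solutions, which does not follow from the paper's hypothesis that $P$ is merely simple (in the example above the unique feasible basis is $\{1,2,3\}$ with two basic variables equal to zero, yet $P$ is simple), and writing $|B_v|=g$ presumes $A$ has full row rank. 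These degeneracy issues must be handled honestly (or the nondegeneracy hypothesis strengthened to the LP form used by Billera--Provan) before the bijection between vertices of $P$ and transversal bases, and hence the dualization to a product of simplices, can be carried out. Finally, your ``alternative'' finish via Proposition~\ref{propBillProv}(1) is not an independent shortcut: to know $\Sigma$ is a matroid complex you already need the bases-as-transversals claim, at which point the join structure is explicit anyway.
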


Let $P$ be a polyhedron of solutions of a system of type~\eqref{eqPolyhedron}. For this polyhedron, a simplicial nerve-complex $K_P$ is defined. The vertices of $K_P$ correspond to facets of $P$, and a collection $\{i_1,\ldots,i_s\}$ is a simplex of $K_P$ if and only the corresponding facets intersect: $\F_{i_1}\cap\ldots\cap\F_{i_s}\neq\varnothing$. Notice that a polyhedron $P$ defined by~\eqref{eqPolyhedron} does not contain a line, since $P\subset\Rg^g$. Therefore, $P$ has a vertex. If a system~\eqref{eqPolyhedron} is nondegenerate, so that $P$ is a simple polyhedron of some dimension $k$, then $\dim K_P=k-1$. The nerve-complex $K_P$ is either a simplicial sphere (if $P$ is bounded) or a simplicial disc (if $P$ is unbounded).

Unlike general polyhedra given by~\eqref{eqPolyhedron}, nerve-complexes of Leontief systems have specific combinatorics.

\begin{lem}\label{lemLeonIsMatroid}\mbox{}
\begin{enumerate}
  \item For a nondegenerate totally Leontief system, the simplicial complex $K_P$ is isomorphic to a join $\Delta_{\ca{A}}=\dd\Delta_{A_1}\ast\cdots\ast \dd\Delta_{A_s}$ of boundaries of simplices. In this case $K_P$ is a matroid complex.
  \item For a nondegenerate non-totally Leontief system, the simplicial complex $K_P$ is isomorphic to a subcomplex of $\Delta_{\ca{A}}$ which is a PL-ball of the same dimension as $\Delta_{\ca{A}}$.
\end{enumerate}
\end{lem}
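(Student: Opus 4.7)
The plan is to reduce both parts to the canonical Leontief polyhedron described in Example~\ref{exSimplices} and then compute its nerve complex combinatorially.

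For part (1), by Proposition~\ref{propBillProvEconomy}, the polyhedron $P$ of a nondegenerate totally Leontief system is combinatorially equivalent to a product of standard simplices $\Delta^{k_1-1}\times\cdots\times\Delta^{k_s-1}$. Since $K_P$ depends only on the combinatorial type of $P$, it suffices to compute the nerve for this product. The nerve of a product of polytopes is the simplicial join of the nerves of the factors: facets of the product are of the form $F_i\times\prod_{j\neq i}Q_j$, and a collection of such facets meets if and only if its projection to each factor meets. The nerve of $\Delta^{k_i-1}$ is $\dd\Delta_{A_i}$ on its $k_i$ facet-vertices, since any proper subset of the $k_i$ facets of a simplex shares a common vertex while the whole collection does not. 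This yields $K_P\cong\dd\Delta_{A_1}\ast\cdots\ast\dd\Delta_{A_s}$. To see that this is a matroid complex, I would note that each $\dd\Delta_{A_i}$ is the independence complex of the uniform matroid $U_{k_i-1,k_i}$, and by item 1 of Remark~\ref{remCorrespondence} the join of independence complexes of matroids is again an independence complex, namely of the matroid direct sum.

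For part (2), I would first establish an unbounded analogue of Proposition~\ref{propBillProvEconomy}: a nondegenerate non-totally Leontief polyhedron is combinatorially equivalent to a product of standard simplices with a nonnegative cone $\Rg^d$ for some $d\geqslant 1$. This should follow from the same pivoting/column-elimination argument of~\cite{BilProv} applied along the recession cone of $P$. Granted this, the nerve of $\Rg^d$ is the full simplex $\Delta_D$ on $d$ vertices, because all $d$ coordinate hyperplanes intersect at the origin. Hence $K_P$ is a join of boundaries of simplices with a full simplex. This is the join of PL-spheres with a PL-ball, and is therefore itself a PL-ball of the expected dimension; it embeds into $\Delta_{\ca{A}}$ via the canonical inclusion $\Delta_D\hookrightarrow\dd\Delta_{D\cup\{*\}}$ as the facet opposite to an added vertex $*$.

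The main obstacle is establishing the unbounded version of Proposition~\ref{propBillProvEconomy} and matching the appropriate notion of $\Delta_{\ca{A}}$ in part (2) to the nerve data. Once the structural reduction is in place, the nerve computation, the identification with the claimed join, and the PL-ball conclusion are all direct.
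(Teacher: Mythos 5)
Your part (1) is fine and is essentially the paper's own route: the paper treats Proposition~\ref{propBillProvEconomy} (cited from~\cite{BilProv}) as equivalent to item (1), the equivalence being exactly your computation that the nerve of a product of polytopes is the join of the nerves and that the nerve of $\Delta^{k-1}$ is $\dd\Delta_{[k]}$; the matroid observation via uniform matroids and direct sums is also correct.

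Part (2), however, rests on a structural claim that is false. You propose an ``unbounded analogue'' of Proposition~\ref{propBillProvEconomy}: that a nondegenerate non-totally Leontief polyhedron is combinatorially $\Delta^{k_1-1}\times\cdots\times\Delta^{k_s-1}\times\Rg^d$, so that $K_P$ would be $\dd\Delta_{A_1}\ast\cdots\ast\dd\Delta_{A_s}\ast\Delta_D$. This is not true in general, and indeed item (2) of the lemma is deliberately weaker (a full-dimensional PL-ball inside a join of boundaries of simplices, not necessarily a join with a simplex). A concrete counterexample: take $g=2$, $f=4$,
\[
A=\begin{pmatrix}1 & 0 & 1 & -2\\ 0 & 1 & -2 & 1\end{pmatrix},\qquad b=\begin{pmatrix}1\\1\end{pmatrix},
\]
i.e.\ factories $1,2$ produce goods $1,2$ from nothing, factory $3$ produces good $1$ consuming two units of good $2$, factory $4$ produces good $2$ consuming two units of good $1$. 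Each column has exactly one positive entry and $b\geqslant 0$, so this is a Leontief system; it is unbounded (the recession cone contains $(3,0,1,2)$) and nondegenerate. One checks that exactly the bases $\{1,2\}$, $\{1,4\}$, $\{2,3\}$ are feasible, so $P$ is a $2$-dimensional simple polyhedron with $3$ vertices and $4$ facets ($F_1,F_2$ unbounded rays, $F_3,F_4$ bounded edges). No product of simplices with an orthant has this face vector (the only $2$-dimensional candidates are $\Delta^2$, $\Delta^1\times\Delta^1$, $\Delta^1\times\Rg$, $\Rg^2$). Correspondingly, $K_P$ is the path with edges $\{1,4\},\{3,4\},\{2,3\}$: it is a full-dimensional PL-ball inside the $4$-cycle $\dd\Delta_{\{1,3\}}\ast\dd\Delta_{\{2,4\}}$, as item (2) asserts, but it is \emph{not} of the form (join of boundaries of simplices)$\,\ast\,$(full simplex) --- any $1$-dimensional complex of that form has at most $3$ vertices. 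So the reduction you rely on cannot be established, and the actual proof of item (2) requires the finer analysis of feasible (Leontief) bases of unbounded systems carried out in~\cite{BilProv}, which is what the paper itself cites rather than reproves.
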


Proposition~\ref{propBillProvEconomy} is a reformulation of the first part of this lemma.

\begin{rem}
Leontief (simplicial) complexes were introduced in~\cite{BilProv} as an abstraction for the nerve-complexes of nondegenerate Leontief substitution systems. We do not give the definition here, however, we notice that among all Leontief simplicial complexes, only $\Delta_{\ca{A}}=\dd\Delta_{A_1}\ast\cdots\ast \dd\Delta_{A_s}$ and $\Delta_{\ca{A}}\ast\Delta^{d-1}$ are the independent complexes of a matroid, see~\cite[Thm.3.4]{BilProv}. Therefore, if a torus representation $T\to\GL(V)$ have weights $\alpha$, then the following statements are equivalent:
\begin{enumerate}
  \item $K(\alpha)$ is a Leontief complex;
  \item $V$ is a Leontief representation.
\end{enumerate}
This explains the name proposed for such representations in the current paper. Notice that both statements above are equivalent to $V/T$ being a manifold (with or without boundary) as stated in Theorem~\ref{thmAll}.
\end{rem}

\section{Dirac monopoles and torus actions}\label{secMonopoles}

Proposition~\ref{propLocalQuotient} is well-known and extremely important for $n=2$. According to Lemma~\ref{lemTechnical}, we may restrict ourselves to a circle action on $\Co^2\cong\Ro^4$ given by
\[
T^1=\{(t_1,t_2)\in T^2\mid t_1^{c_1}t_2^{c_2}=1\} \text{ acts by } (t_1,t_2)(z_1,z_2)=(t_1z_1,t_2z_2).
\]
with $c_1,c_2$ both nonzero and coprime. This $T^1$-representation can be rewritten in a more convenient and familiar form
\begin{equation}\label{eqDim2Hopf}
t(z_1,z_2)=(t^kz_1, t^lz_2)
\end{equation}
where $k,l$ are nonzero and coprime (it is easily seen that $k=c_2$, $l=c_1$). So far we get a particular example of the circle representation described in Lemma~\ref{lemCircle}. Here we fix the orientation of $\Ro^4$ (and the irreducible summands) compatible with the chosen complex structure.

Restricting~\eqref{eqDim2Hopf} to the unit sphere $S^3\subset\Co^2$ and taking the quotient by $T^1$ we get the weighted projective line $\CP^1(k,l)$. The latter is a (real) 2-dimensional orbifold with two isolated singularities having the isotropy groups $\Zo_k$ and $\Zo_l$. The weighted projective line $\CP^1(k,l)$ is also the quotient of $\Co^2\setminus\{0\}$ by the action of the algebraical torus $\Co^\times$ given by the same formula~\eqref{eqDim2Hopf}, so it is also an algebraic variety. The underlying topological space of $\CP^1(k,l)$ is homeomorphic to $S^2$. It is also isomorphic to $\CP^1$ as a variety. We refer to~\cite{BFNR} as a good survey of the topology of weighted projective spaces. The quotient map takes the form
\begin{equation}\label{eqGenHopf}
p_{k,l}\colon S^3\to S^3/T^1=\CP^1(k,l)\cong S^2.
\end{equation}
Of particular importance are the cases $(k,l)=(1,1)$ (the Hopf bundle), and $(k,l)=(1,-1)$ (inverse Hopf bundle).

\begin{rem}
The Hopf bundle $p_{1,1}$ is the generator of $\pi_3(S^2)\cong\Zo$. In this homotopy group, the following identity holds $[p_{k,l}]=kl[p_{1,1}]\in\pi_3(S^2)$. This can be seen by composing $p_{1,1}$ with the map $S^3\to S^3$, $(z_1,z_2)\mapsto (z_1^k,z_2^l)$ having degree $kl$.
\end{rem}

Taking the open cone of the map $p_{k,l}$, we get the map
\[
\Cone p_{k,l}\colon\Ro^4=\Cone S^3\to \Ro^4/T^1=\Cone\CP^1(k,l)\cong \Ro^3,
\]
which, in particular, justifies Proposition~\ref{propLocalQuotient} for $n=2$.

The maps $\Cone p_{1,1}, \Cone p_{1,-1}\colon \Ro^4\to\Ro^3$ serve as the local topological models of Dirac magnetic monopoles in Kaluza--Klein theory. We give a brief overview of this theory below, and refer to~\cite[p.5]{AtBer} and references therein explaining the importance of Hopf bundles in theoretical physics.

\begin{con}\label{conKaluzaKlein}
Consider a smooth $T^1$-action on an orientable 4-manifold $X$ such that all its stabilizer subgroups are connected, and fixed points of the action are isolated. Then the orbit space $Q=X/T$ is an orientable (topological) 3-manifold. The projection map $p\colon X\to Q$ is called \emph{(a simplified topological) Kaluza--Klein model}. The fixed points of the action are called \emph{magnetic monopoles}.

Let $x\in X^{T^1}$ be a fixed point. The tangent representation $\tau_xX$ is a complexity one $T^1$-representation in general position with some nonzero weights $k,l\in\Hom(T^1,T^1)$. Since $\tau_xX$ is oriented, the weights are defined up to simultaneous change of sign. If either $|k|>1$ or $|l|>1$, the representation (and hence the action on $X$) has disconnected stabilizers, see details in~\cite{AyzCompl}. Therefore we either have $(k,l)=(1,1)$ or $(1,-1)$, meaning that each tangent representation is the cone over either the Hopf bundle or the inverse Hopf bundle. We say that the magnetic monopole $x$ has charge $+1$ in the first case, and $-1$ in the second case.
\end{con}

\begin{rem}
The physical motivation for this construction goes as follows. The 3-manifold $Q$ is interpreted as a physical space, while the acting torus $T^1=U(1)$ is the gauge group of electromagnetism. Away from magnetic monopoles, the $T^1$-action is free, so it gives rise to a principal $T^1$-bundle over the physical space, resulting in a gauge theory. Historically, Kaluza--Klein model was a precursor of the more general Yang--Mills theory.

The quantum theory of magnetic monopole proposed by Dirac~\cite{Dirac}, being reformulated in topological terms, is based on the observation that there exist nontrivial $T^1$-bundles over $\Ro^3\setminus\{0\}\simeq S^2$, where $0$ is the monopole. Principal $T^1$-bundles over $S^2$ are classified by the homotopy classes
\[
[S^2,BT^1]=[S^2,K(\Zo,2)]\cong H^2(S^2;\Zo)\cong\Zo
\]
therefore the charge gets quantized. If the charge is $+1$ or $-1$, one gets Hopf or inverse Hopf bundle respectively. In this case it is possible to compactify both the base $\Ro^3\setminus\{0\}$ and the total space of the fibration and get a well-defined map $\Ro^4\to\Ro^3$ which is either $\Cone p_{1,1}$ or $\Cone p_{1,-1}$. This observation elegantly embeds Dirac quantum monopoles into Kaluza--Klein model, at least on the topological level.

Notice, however, that this observation is not suitable for Dirac monopoles with charges $q$ different from $\pm1$. If $|q|\geqslant 2$, the total space of the corresponding $S^1$-fibration on $S^2$ is homeomorphic to the lens space $L(q;1)$, hence compactifying it at the monopole point produces a singularity of type $\Cone L(q;1)$ in the total space. In this case, $X$ is not a manifold anymore.
\end{rem}

In the terminology of Construction~\ref{conKaluzaKlein}, the following statement holds.

\begin{prop}\label{propSumCharges}
In a closed Kaluza--Klein model the charges of all monopoles sum to zero.
\end{prop}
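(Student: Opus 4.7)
The plan is to interpret the total charge as the Euler number of a principal $T^1$-bundle evaluated on a $2$-cycle that is null-homologous in a slightly larger $3$-manifold, which forces the sum to vanish.

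First I would check that $Q = X/T^1$ is a closed, oriented topological $3$-manifold. Since all stabilizers are connected and the only connected subgroups of $T^1$ are the trivial one and $T^1$ itself, the action is free on the complement of the (isolated) fixed-point set $X^{T^1}$. At each monopole, Proposition~\ref{propLocalQuotient} with $n=2$ gives the local model $\Co^2/T^1\cong\Ro^3$, so $Q$ is Euclidean there as well. Orientability follows because the connected group $T^1$ acts on the oriented manifold $X$ by orientation-preserving diffeomorphisms, and the Euclidean coordinates at the monopoles can be chosen to extend the orientation coming from the free locus.

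Next I would delete small open $3$-ball neighborhoods of the monopoles $x_1,\ldots,x_N$ to produce a compact oriented $3$-manifold $Q'$ whose boundary is a disjoint union $\partial Q' = \bigsqcup_i S^2_i$. Over $Q'$ the action is free, so the preimage $X'\subset X$ maps to $Q'$ as a principal $T^1$-bundle $p\colon X'\to Q'$, and I would consider its Euler class $e\in H^2(Q';\Zo)$. The restriction of $p$ over each $S^2_i$ is the circle bundle corresponding to the tangent representation at $x_i$, i.e.\ the Hopf bundle $p_{1,1}$ when $q_i=+1$ and the inverse Hopf bundle $p_{1,-1}$ when $q_i=-1$; by the definition of charge adopted in Construction~\ref{conKaluzaKlein} this means $\langle e|_{S^2_i},[S^2_i]\rangle = q_i$. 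The homological punchline is that the classes $[S^2_i]$ sum to zero in $H_2(Q';\Zo)$: the connecting map in the long exact sequence of the pair $(Q',\partial Q')$ sends the relative fundamental class $[Q',\partial Q']$ to $\sum_i[S^2_i]\in H_2(\partial Q')$, and exactness at $H_2(\partial Q')$ then forces $\iota_*\sum_i[S^2_i]=0$, where $\iota\colon\partial Q'\hookrightarrow Q'$. Pairing with $e$ gives
\[
\sum_{i=1}^N q_i \;=\; \sum_i\langle e|_{S^2_i},[S^2_i]\rangle \;=\; \Bigl\langle e,\,\iota_*\sum_i[S^2_i]\Bigr\rangle \;=\; 0.
\]

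The main obstacle will be the orientation bookkeeping needed to guarantee that the Euler number of $p|_{S^2_i}$ comes out with sign $+q_i$ rather than $-q_i$. This requires choosing compatible orientations on $X$, on $Q$ (and hence on $Q'$), on the outward-oriented boundary spheres $S^2_i$, and on the fibers of $p$, all in agreement with the complex structure on $\tau_{x_i}X$ prescribed by the tangent representation of type $(1,\pm 1)$. Once these conventions are fixed consistently, the rest is a one-line application of Poincar\'e--Lefschetz duality.
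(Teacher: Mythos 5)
Your argument is correct, but it is genuinely different from what the paper does: the paper offers no direct proof of Proposition~\ref{propSumCharges}, deferring instead to Fintushel's classification of circle actions on $4$-manifolds in terms of their orbit data~\cite{Fint1,Fint2}, of which the vanishing of the total charge is a special consequence. Your route is self-contained and more elementary. Since all stabilizers are connected, the only possible stabilizers are $\{1\}$ and $T^1$, so the action is free off the finite fixed-point set; removing disjoint invariant disc neighbourhoods of the monopoles and passing to the quotient gives a principal $T^1$-bundle over a compact oriented $3$-manifold $Q'$ with boundary the linking spheres $S^2_i$. By the slice theorem the restriction over $S^2_i$ is the (inverse) Hopf bundle, so its Euler number is $\varepsilon q_i$ for a sign $\varepsilon=\pm1$ that is uniform in $i$ once orientations of $X$, the fibers, $Q'$, and the boundary are fixed compatibly; and $\sum_i[S^2_i]=\partial[Q',\partial Q']$ maps to zero in $H_2(Q';\Zo)$ by exactness of the pair sequence, so pairing with the Euler class kills the sum. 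Note that even a uniform error in the sign convention would not affect the conclusion, so the orientation bookkeeping you flag, while worth writing out, cannot derail the argument. The trade-off is clear: the paper's citation buys the full strength of Fintushel's orbit-data classification (which also handles exceptional orbits and fixed surfaces, i.e.\ the ``more general statement'' alluded to there), whereas your Euler-class computation buys a short, transparent proof tailored exactly to the simplified topological Kaluza--Klein setting of Construction~\ref{conKaluzaKlein}, and it makes visible the interpretation of the total charge as the evaluation of the Euler class of the free part of the action on a null-homologous $2$-cycle.
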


The more general statement is proved in the seminal works of Fintushel~\cite{Fint1, Fint2} who classified circle actions on 4-manifolds in terms of their orbit data. There is also a resemblance between global topological properties of $X$ and $Q$.

\begin{prop}\label{propSpheres}
Assume that Kaluza--Klein model $X\mapsto X/T^1=Q$ is closed and there is at least one monopole. Then
\begin{enumerate}
  \item $\pi_1(X)=1$ if and only if $Q$ is homeomorphic to $S^3$;
  \item the $T^1$-action on $X$ is equivariantly formal if and only if $Q$ is a homology 3-sphere.
\end{enumerate}
\end{prop}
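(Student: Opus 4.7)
The plan is to establish the key intermediate fact that under the hypotheses the projection $p\colon X\to Q$ induces an isomorphism $p_*\colon\pi_1(X)\to\pi_1(Q)$ whenever there is at least one monopole; both parts then follow by standard tools. Since connected subgroups of $T^1$ are only $\{1\}$ and $T^1$, every non-fixed point has trivial stabilizer, so $p\colon X\setminus F\to Q\setminus F$ is a principal $T^1$-bundle, where $F=X^{T^1}$ is the finite set of monopoles. The set $F$ has codimension $\geqslant 3$ in both $X$ and $Q$, so inclusion induces $\pi_1(X\setminus F)\cong\pi_1(X)$ and $\pi_1(Q\setminus F)\cong\pi_1(Q)$, and the homotopy long exact sequence of the bundle reduces to
\[
\pi_1(T^1)\xrightarrow{j_*}\pi_1(X)\xrightarrow{p_*}\pi_1(Q)\to 1.
\]
To see $j_*=0$, fix a monopole $x_0$. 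By Construction~\ref{conKaluzaKlein} the local model is the action $t\cdot(z_1,z_2)=(tz_1,t^{\pm 1}z_2)$ on $\Co^2$, and the embedded disc $\{(z,0)\colon|z|\leqslant\varepsilon\}\subset X$ has boundary a principal $T^1$-orbit and center $x_0$, providing a null-homotopy of a fiber in $X$. Hence $j_*=0$ and $p_*$ is an isomorphism.

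For part (1), $Q$ is a closed orientable topological $3$-manifold (orientability transfers from $X$ and the oriented $T^1$-fibers on $X\setminus F$, and extends over the isolated monopoles since orbit space neighborhoods look like $\Ro^3$ by Proposition~\ref{propLocalQuotient}), so $\pi_1(X)=1\iff\pi_1(Q)=1$, which by Perelman's theorem is equivalent to $Q\cong S^3$. For part (2), Hurewicz together with $\pi_1(X)\cong\pi_1(Q)$ yields $b_1(X)=b_1(Q)$. Poincar\'{e} duality on $Q$ gives $b_2(Q)=b_1(Q)$, so $Q$ is a homology $3$-sphere (over $\Qo$) precisely when $b_1(Q)=0$. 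On the $4$-manifold side, for a $T^1$-action with isolated fixed points on a closed oriented manifold, equivariant formality is equivalent to $\dim_\Qo H^*(X;\Qo)=|F|$; combining this with the Lefschetz identity $\chi(X)=|F|$ and the Poincar\'{e} duality $b_3(X)=b_1(X)$ on $X$ collapses the condition to $b_1(X)=0$. Thus equivariant formality holds iff $b_1(X)=b_1(Q)=0$, iff $Q$ is a homology $3$-sphere.

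The main obstacle is the disc-bounding step in the first paragraph: one must verify that the boundary of $\{(z,0)\colon|z|\leqslant\varepsilon\}$, viewed as a loop in $X\setminus F$, represents the generator $j_*(1)$ (this is free homotopy to the abstract fiber in the connected principal bundle), and that the disc certifies null-homotopy in $X$ despite passing through the removed point $x_0$ --- the catch being precisely that the inclusion $X\setminus F\hookrightarrow X$ is a $\pi_1$-isomorphism because $F$ has codimension $4$ in $X$. After this, everything else is a routine combination of long exact sequences, Hurewicz, Poincar\'{e} duality in dimensions $3$ and $4$, and the Borel-type characterization of equivariant formality for torus actions with isolated fixed points.
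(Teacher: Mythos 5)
Your argument is correct in its main line but follows a genuinely different route from the paper. The paper disposes of item (1) by citing Fintushel's classification of circle actions on simply connected $4$-manifolds together with the Poincar\'{e} conjecture, and of item (2) by citing the homological results of~\cite{AyzMasEquiv} (equivariant formality $\Leftrightarrow H^{\odd}(X;\Zo)=0 \Leftrightarrow H_1(X;\Zo)=0$ for oriented $4$-folds). You instead prove the key fact $\pi_1(X)\cong\pi_1(Q)$ directly: on the complement of the monopoles the action is free (stabilizers are connected, hence trivial or all of $T^1$), giving a principal $T^1$-bundle; removing the isolated fixed points (codimension $4$ upstairs, $3$ downstairs) does not change $\pi_1$; and the fiber class dies in $\pi_1(X)$ because the invariant disc $\{(z,0):|z|\leqslant\varepsilon\}$ in the local model $t(z_1,z_2)=(tz_1,t^{\pm1}z_2)$ has a free orbit as boundary. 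Since $\pi_1(X\setminus F)\to\pi_1(X)$ is injective, this kills $j_*$ and makes $p_*$ an isomorphism, after which Perelman gives (1). This is more self-contained than the paper's citation-based proof and makes transparent exactly where the hypothesis ``at least one monopole'' is used; what it costs is that (1) silently reproves a special case of Fintushel's $\pi_1(X)\cong\pi_1(X^*)$, which the paper simply quotes.

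One caveat in item (2): your criterion ``equivariant formality $\Leftrightarrow \dim_\Qo H^*(X;\Qo)=|F|$'' and the Euler characteristic computation establish the statement with rational coefficients (you say so yourself, ``over $\Qo$''), i.e.\ $\Qo$-equivariant formality $\Leftrightarrow b_1(X)=0 \Leftrightarrow Q$ is a rational homology sphere. The paper's intended statement is integral: equivariant formality in the sense of~\cite{MasPan} with isolated fixed points is equivalent to $H^{\odd}(X;\Zo)=0$, hence to $H_1(X;\Zo)=0$, and ``homology $3$-sphere'' means $H_1(Q;\Zo)=0$; these differ exactly when $H_1(X;\Zo)$ is nontrivial torsion. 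The repair is immediate from what you already have: your $\pi_1$-isomorphism gives $H_1(X;\Zo)\cong H_1(Q;\Zo)$ integrally by Hurewicz, so replacing the rational dimension count by the cited integral criterion $H^{\odd}(X;\Zo)=0\Leftrightarrow H_1(X;\Zo)=0$ yields the integral biconditional. As written, though, the coefficient conventions in your last sentence do not quite match the claim being proved.
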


Item (1) follows from~\cite{Fint1} and Poincar\'{e} conjecture. Item (2) is the homological version of the first item: under the assumption of isolated fixed points, equivariant formality is equivalent to the condition $H^{\odd}(X;\Zo)=0$. For oriented 4-folds this condition further simplifies to $H_1(X;\Zo)=0$. This homological statement was proved in~\cite{AyzMasEquiv} in a more general context of complexity one actions in general position. In higher dimensions we still have a correspondence between equivariant formality of a manifold, and the fact that its orbit space is a homology sphere.

\begin{rem}
It should be noted that the classical Kaluza--Klein theory considers $T^1$-bundle not over a 3-dimensional manifold, but over 4-dimensional curved space-time, so the whole theory is 5-dimensional. The aim of this theory is to incorporate both Maxwell equations and Einstein field equations uniformly as the Euler--Lagrange equations of a certain functional defined over a $T^1$-bundle on a space-time.

With time added into the model, magnetic monopoles become world lines, they are represented by 1-dimensional curves. In this case, the local model of the monopole is a circle action on $\Ro^5$ which is the product of complexity one representation in general position on $\Co^4$, and the trivial action on $\Ro^1$.

Globally these world lines may be treated as the components of the fixed point $X^{T^1}$ for a $T^1$-action on a 5-manifold $X$. Assume that the worldline of the monopole is oriented. Then the trivial component of the tangent representation gets oriented. Then the nontrivial transversal component becomes canonically oriented as well, so there is still a difference between Hopf and its inverse, and we can assign the charge $\pm1$ to the worldline depending on which one is the case. Changing an orientation of the worldline switches its charge.

Usually worldlines are assumed timelike, so we can choose their orientation canonically to agree with the causality in the ambient space-time $Y=X/T^1$. However, allowing the curves $X^{T^1}$ to be spacelike at some points, the situation when a couple of oppositely charged monopoles is born or dies becomes naturally incorporated into the model, see Fig.~\ref{figBirthDeath}.
\end{rem}

\begin{figure}
  \centering
  \includegraphics[scale=0.3]{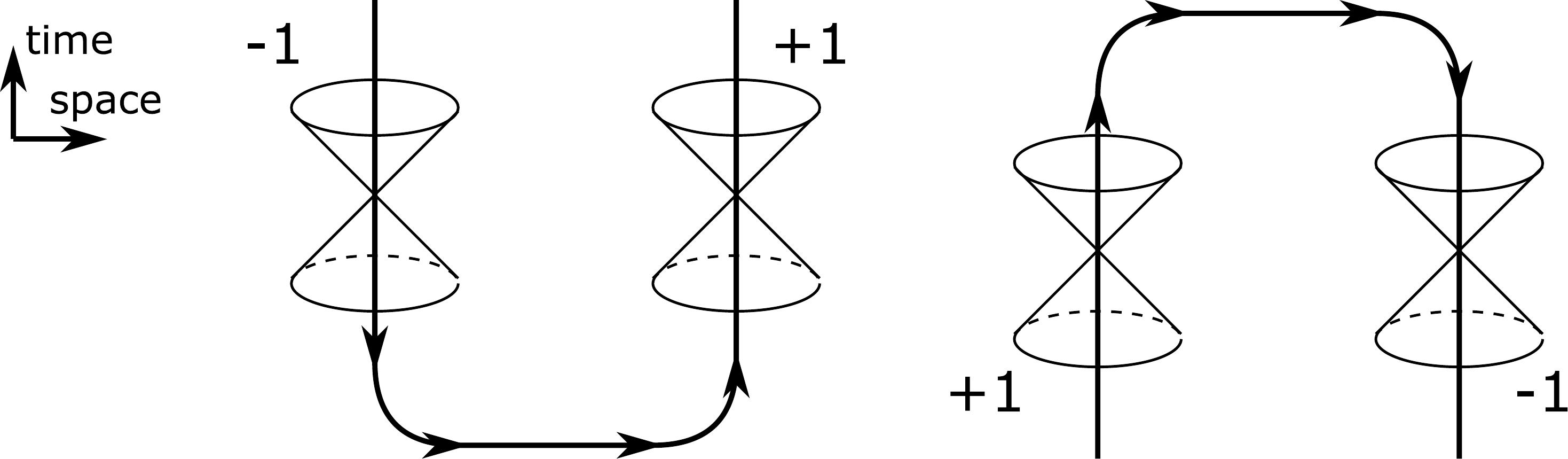}
  \caption{Birth (left) and death (right) of two oppositely charged monopoles in 5-dimensional Kaluza--Klein model. The world lines of monopoles are the connected components of a $T^1$-action on a 5-fold.}\label{figBirthDeath}
\end{figure}

The previous remark motivates further topological study of complexity one $T^1$-actions in general position on $5$-folds. In particular, it may be instructive to find the correct analogue of Proposition~\ref{propSpheres} for the 5-dimensional Kaluza--Klein theory with monopoles.

\begin{rem}
The important feature of the Kaluza--Klein model (either 4- or 5-dimensional) is that it is a torus action whose orbit space is a manifold without boundary, interpreted as the observed universe. If one restricts to the models where the gauge group is a compact torus, the analogue of the Kaluza--Klein model should be a smooth torus action on a manifold, whose orbit space is also a manifold. According to Proposition~\ref{propOrbitsLeontief}, such actions are precisely totally Leontief actions (under some assumptions on the fixed points which can be weakened in a natural way). Therefore, totally Leontief torus action give the broadest class of topological models generalizing Kaluza--Klein model.
\end{rem}

\end{document}